\documentclass[11pt, a4paper]{amsart}
\usepackage[utf8]{inputenc}
\usepackage{amsmath,amsthm,verbatim,amscd,amssymb,setspace,enumitem,hyperref}
\usepackage[T1]{fontenc}
\usepackage{amsfonts}
\usepackage[english]{babel}
\usepackage{array,epsfig}
\usepackage{amsxtra}
\usepackage{latexsym}
\usepackage{dsfont}
\usepackage{mathrsfs}
\usepackage[mathscr]{eucal}
\usepackage{color}
\usepackage[all]{xy}
\usepackage{hyperref}
\usepackage{graphicx}
\usepackage{mathtools}
\usepackage{caption}
\usepackage{relsize}
\usepackage{todonotes}
\usepackage{fancyvrb}
\usepackage{xcolor}
\usepackage{tikz-cd}

\theoremstyle{plain}
\newtheorem{defn}{Definition}[section]
\newtheorem{thm}[defn]{Theorem}
\newtheorem*{theorem*}{Theorem}

\newcounter{mtheorem}
\newtheorem{mtheorem}[mtheorem]{Theorem}

\theoremstyle{definition}
\newtheorem{cor}[defn]{Corollary}
\newtheorem{rmk}[defn]{Remark}

\newtheorem{lem}[defn]{Lemma}
\newtheorem{ex}[defn]{Example}

\newtheorem{prop}[defn]{Proposition}

\newcommand{\set}[1]{\left\{#1\right\}}
\newcommand{\tuple}[1]{\left(#1\right)}

\newcommand{\abs}[1]{\left|#1\right|}

\newcommand{\sprod}[1]{\left<#1\right>}

\newcommand{\ol}[1]{\overline{#1}}

\newcommand{\wt}[1]{\widetilde{#1}}

\newcommand{\pfrak}{\mathfrak{p}}

\newcommand{\tfrak}{\mathfrak{t}}

\newcommand{\Cbb}{\mathbb{C}}

\newcommand{\Nbb}{\mathbb{N}}

\newcommand{\Pbb}{\mathbb{P}}
\newcommand{\Qbb}{\mathbb{Q}}
\newcommand{\Rbb}{\mathbb{R}}

\newcommand{\Zbb}{\mathbb{Z}}

\newcommand{\Bcal}{\mathcal{B}}
\newcommand{\Ccal}{\mathcal{C}}

\newcommand{\Fcal}{\mathcal{F}}

\newcommand{\Ical}{\mathcal{I}}

\newcommand{\Kcal}{\mathcal{K}}
\newcommand{\Lcal}{\mathcal{L}}

\newcommand{\Ocal}{\mathcal{O}}

\newcommand{\Xcal}{\mathcal{X}}
\newcommand{\Ycal}{\mathcal{Y}}
\newcommand{\Ncal}{\mathcal{N}}

\renewcommand{\phi}{\varphi}

\newcommand{\del}{\partial}
\newcommand{\delb}{\overline{\partial}}

\newcommand{\Scal}{\operatorname{Scal}}

\title{K-semistability and singularities of normal affine cones}

\usepackage[doi=false, url=false, style=alphabetic, isbn=false]{biblatex}
\author{Tran-Trung Nghiem}
\begin{document}
\maketitle

\begin{abstract}
We provide a complete and optimal characterization of $\Qbb$-Gorenstein normal affine singularities in terms of their K-stability in the sense of Collins--Székelyhidi. 
\end{abstract}


\section{Introduction}

\subsection{Main result}

Let \( Y \) be a  $T$-variety, i.e. a complex normal affine variety of dimension \( n+1\) endowed with the effective action of a torus \( T \) \cite{AH06}. When the action is good, we say that $(Y,T)$ is a \emph{normal affine cone}. In concrete terms, this is the cone over a projective variety with a given embedding in some projective space. 
In the last 20 years, 
a notion of K-stability for $Y$ has been well developed and refined notably in the works \cite{MSY06, MSY08, RT11, CS18, CS19, LWX, Wu22}. This condition is expected to characterize the existence of constant scalar curvature Sasaki metrics on the cone's base. The metrics have positive scalar curvature if and only if it lifts to a scalar-flat Kähler cone metric on $Y$, and conversely any cscK cone metric on $Y$ must be scalar-flat. In complex geometry, affine cones appear, for example, as local tangent cones of isolated singularities; or asymptotic cones of complete Kähler metrics. 
Thus, understanding how K-(semi)stability affects cone singularities would be the first step to understand more general cscK metrics near local singularities.

On the other hand, seminal results \cite{Oda12, Oda13, OS15} in the compact cases show that the K-stability notion, defined in \cite{Tia97, Don02}, must surprisingly restrict singularities of a scheme (with $\Qbb$-Cartier canonical divisor) to the classes originally introduced to run Mori's MMP; cf. \cite{KSB88, Ale96, KM98} for a detailed account. 

Recall from \cite{Oda13} that if a normal compact polarized variety $(X,L)$ is K-semistable, then $X$ must have \emph{log canonical singularities} (or semi-log canonical if $X$ is not normal); and \emph{Kawamata log terminal singularities} (klt) when $L = -K_X$. Namely, let $K_X$ be the canonical divisor on $X$ such that $mK_X$ is Cartier for some $m > 0$; then $X$ is said to be log canonical if for one (hence any) resolution $X' \to X$, 
\[ K_{X'} \sim_{\Qbb} f^*(K_X) + \sum_{i} a_iE_i, \]
the discrepancy  $a_i = a(X,E_i)$ satisfies $a_i \geq -1$ (resp. $> -1$). The definition applies as well for an arbitrary normal variety $Y$ with $mK_Y$ Cartier (i.e., $Y$ is $\Qbb$-Gorenstein) so that $f^{*}mK_Y$ is well-defined. 
However, it turns out that the cone situation is slightly more subtle as the following examples show. 
\begin{ex}
If $X$ is a projective variety with ample canonical divisor $K_X$ and klt singularities (hence K-stable \cite{Oda12}), then $C(X,K_X)$ is $\Qbb$-Gorenstein and K-stable, but does not have klt singularities \cite[Lemma 3.1]{Kol13}. 

Another situation is when $(X,L)$ is a polarized pair and $X$ is Calabi--Yau klt, then $C(X,L)$ is again $\Qbb$-Gorenstein and K-stable but may not even have rational singularities \cite[Lemma 3.6]{Kol13}, hence not log terminal. 
\end{ex}
Thus, K-semistability for a normal affine cone would at best allow us to deduce log canonical singularities outside of the apex. On the other hand, when the variety is an affine cone over a log Fano variety, then K-semistability should also imply that the apex is a klt singularity as well; that is, any exceptional divisor dominating the apex must have log discrepancy $> -1$. A $\Qbb$-Gorenstein normal affine cone $Y$ with klt singularities also goes under the name of \emph{Fano cone singularity} \cite{LWX}. This type of singularity is assumed to be the right category for Calabi--Yau metrics on normal affine cones \cite{MSY08, CS19}. Our main theorem confirms this expectation and provides an answer to \cite[Problem 2.19]{Oda24a}. 

\begin{mtheorem} \label{theorem:main}
Let $(Y,T)$ be a $\Qbb$-Gorenstein normal affine cone. Then if $(Y,T, \xi)$ is K-semistable, the K-semistable Reeb vector is unique up to scaling and $Y \backslash \set{0}$ has log canonical singularities. More precisely, let $a_1(\xi)$ be the slope of the K-semistable pair $(Y,\xi)$  (cf. Proposition \ref{prop:indexcharexpansion}), we have the following trichotomy: 
\begin{enumerate}
    \item \label{case:gentype} If $a_1(\xi) < 0$ then  $(Y,T,\xi)$ is K-stable iff $Y \backslash \set{0}$ is log canonical.  
    \item \label{case:calabiyau} If $a_1(\xi) = 0$ then $(Y,T,\xi)$ is K-stable iff $Y \backslash \set{0}$ is klt. 
    \item If $a_1(\xi) > 0$ and $(Y,T,\xi)$ is K-semistable, then $Y$ is itself klt; i.e., $(Y,T)$ is a Fano cone singularity.    
\end{enumerate}
In case \ref{case:gentype} (resp. \ref{case:calabiyau}), $(Y,T)$ is in fact the cone over a log pair of general type (resp. log Calabi--Yau polarized variety). The Reeb cone is one-dimensional in both cases.  
\end{mtheorem}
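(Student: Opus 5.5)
The plan is to reduce everything to the quasi-regular case and then apply the compact results of Odaka together with the Collins--Székelyhidi correspondence between K-stability of cones and of their (orbifold) bases.

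\textbf{Step 1: uniqueness of the Reeb vector and the trichotomy.} Write the index character expansion of Proposition \ref{prop:indexcharexpansion} as $F(\xi,t) = a_0(\xi)(n)!/t^{n+1} + a_1(\xi)(n-1)!/t^n + \dots$. Test configurations induced by product degenerations generated by $\eta \in \tfrak$ have Donaldson--Futaki invariant equal to the derivative of the normalized volume type functional $a_1/a_0^{(n-1)/n}$ (up to sign conventions) in the direction $\eta$. K-semistability forces $\xi$ to be a critical point of this functional on the Reeb cone. On the Reeb cone $a_0$ is strictly convex and $a_1$ is linear in $\xi$ after homogeneity normalization (it is given by the $\Qbb$-Gorenstein index via the weight of a $T$-eigensection of $mK_Y$). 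I will use this to show that when $a_1 \neq 0$ the critical point is unique up to scaling, by the usual convexity argument of Martelli--Sparks--Yau. When $a_1(\xi)=0$, linearity of $a_1$ together with criticality shows $a_1\equiv 0$ on a hyperplane. I then aim to argue that the Futaki invariant along any other direction is nonzero unless the Reeb cone is one-dimensional.

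\textbf{Step 2: cases $a_1\le 0$.} When $a_1(\xi)\le 0$, the criticality condition forces $dF$-type constraints that make $\xi$ proportional to the weight vector of a nonvanishing section of $mK_Y$ (a linear form in $\xi$). A strictly convex volume can only have a critical point compatible with this if $\tfrak$ contributes a one-dimensional Reeb cone. Hence $\xi$ is rational and $Y = C(X,L)$ with $X = (Y\setminus 0)/\Cbb^*$ a polarized orbifold pair $(X,\Delta)$. The sign of $a_1$ equals the sign of $-(K_X+\Delta)\cdot L^{n-1}$, and proportionality $K_X+\Delta \sim_{\Qbb} \lambda L$ follows from $\Qbb$-Gorensteinness of $Y$. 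By Collins--Székelyhidi, K-(semi)stability of the cone equals that of $(X,\Delta;L)$. Odaka's theorems \cite{Oda12, Oda13} give: $\lambda>0$ means K-stable iff lc, and $\lambda=0$ means K-stable iff klt. Finally, discrepancies of $Y\setminus 0$ agree with those of $(X,\Delta)$, since $Y\setminus 0$ is a Seifert $\Cbb^*$-bundle over $X$.

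\textbf{Step 3: case $a_1>0$.} Here I will use the valuative criterion. Consider any divisorial valuation $v$ centered at $0$, and form the filtration/test configuration it induces: the degeneration to the graded ring of $v$, or approximations of it. The Donaldson--Futaki invariant of this test configuration is proportional to a derivative of $\widehat{\mathrm{vol}}$ at $\mathrm{wt}_\xi$ in the direction of $v$, as in Li--Wang--Xu. This yields $A_Y(v)\ge c\cdot\mathrm{vol}$-type positivity, with $A_Y(\mathrm{wt}_\xi)=a_1$-proportional $>0$. Semistability should then give $A_Y(v)>0$ for all $v$, i.e.\ $Y$ is klt.

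\textbf{Main obstacle.} The hardest step is Step 3. The valuative argument of \cite{LWX} presupposes klt to define $\widehat{\mathrm{vol}}$, so it cannot be invoked directly. To get around this, I would first establish log canonicity away from the apex as in Step 2, using approximating quasi-regular Reeb vectors. I would then run Odaka's blow-up argument: deform to the normal cone of an lc center at the apex and compute the Donaldson--Futaki invariant through the discrepancy term, showing it is negative if some $A_Y(E)\le 0$.
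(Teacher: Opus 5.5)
Your Steps 1--2 have a genuine gap: convexity does not give a one-dimensional Reeb cone when $a_1(\xi)\le 0$. For a $\Qbb$-Gorenstein cone it is $a_1/a_0$, not $a_1$, that is a constant multiple of a linear function $\ell$ on $\tfrak$ (the weight of a $T$-eigen-generator of $\Ocal_Y(mK_Y)$). The product test configuration generated by $\eta\in\tfrak$ then has Donaldson--Futaki invariant equal to a nonzero multiple of
\[ \ell(\eta)\,a_0(\xi)+\frac{\ell(\xi)}{n+1}\,D_\eta a_0(\xi). \]

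\emph{Case $\ell(\xi)<0$.} Vanishing for all $\eta$ only says that $\xi$ is a critical point of $a_0$ on the slice $\set{\ell=\ell(\xi)}$. After replacing $\ell$ by $-\ell$, this is exactly the Martelli--Sparks--Yau volume-minimization problem. As soon as $-\ell$ lies in the interior of the moment cone, it has a unique solution up to scaling for a torus of \emph{any} rank; compare $\Cbb^{n+1}$ with its full torus on the Fano side. So strict convexity gives uniqueness of the critical Reeb vector but can never force $\dim T=1$.

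\emph{Case $\ell(\xi)=0$.} Vanishing forces $\ell\equiv 0$, so every product Futaki invariant vanishes identically at every Reeb vector. Your plan to find a nonzero Futaki invariant whenever the Reeb cone is not a ray therefore cannot work.

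The missing idea is algebro-geometric. For quasi-regular $\xi_0$, the exact sequence $1\to\Cbb^*\to\mathrm{Aut}(Y,\xi_0)\to\mathrm{Aut}(X,B;L)\to 1$ reduces one-dimensionality to finiteness of $\mathrm{Aut}(X,B)$. The paper gets this from Iitaka's theorem when $K_{X,B}$ is ample, and from finiteness of automorphisms of \emph{klt} log Calabi--Yau pairs when $a_1=0$. In the latter case klt must be established first, so your order of argument is reversed. The paper proves that K-stability implies $Y\setminus\set{0}$ is klt by approximating $\xi$ with quasi-regular $\xi_0$, whose quotients are log Calabi--Yau; lc non-klt such pairs are not K-stable by \cite[Theorem 4.1]{OS15}, and the destabilizing test configuration is lifted to $Y$. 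Only afterwards does it conclude that the Reeb cone is a ray. The same approximation is how the paper proves the general claim that K-semistability forces $Y\setminus\set{0}$ to be lc. It needs the destabilizing test configuration of $(X,B;L)$ to be $T_0$-equivariant, so that it lifts to a $T$-equivariant test configuration of $Y$ (Theorem~\ref{theorem:conekstab_vs_weightedkstab}). That is what the equivariant log canonical model of Lemma~\ref{lemma:equivariant_lc_model} supplies, and your proposal never addresses it.

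Step 3 is left as a plan whose key computation, a Donaldson--Futaki invariant for a degeneration supported at the apex, is never carried out, and it is not needed. Away from the apex you need klt, not merely lc. For quasi-regular $\xi_0$ near $\xi$ one still has $a_1(\xi_0)>0$, so the quotient is log Fano. The log Fano part of Odaka's theorem (a non-klt log Fano pair is K-unstable, again in $T_0$-equivariant form), together with lifting and continuity of $\xi\mapsto DF(\Ycal,\xi,\eta)$, shows that $Y\setminus\set{0}$ is klt. The apex then needs no valuative criterion, no $\widehat{\mathrm{vol}}$ and no test configuration supported at $0$. Blow up the vertex along $\xi_0$; the exceptional divisor $E\cong X$ has discrepancy $a(E,Y)=-1-\gamma>-1$, where $K_{X,B}\sim_{\Qbb}\gamma L$ and $\gamma<0$ in the log Fano case. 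All other divisors over $Y$ have discrepancies governed by $(X,B)$ (Theorem~\ref{theorem:cone-base-singularities}). Your own remark that $A_Y(\mathrm{wt}_\xi)$ has the sign of $a_1(\xi)$ is exactly this computation, and it already closes the argument.
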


Of course a similar result also holds for a (non-normal) affine cone pair $(Y,D)$, where $D$ is an effective Weil divisor such that $K_Y + D$ is $\Qbb$-Cartier. Since the proof can be repeated without any extra difficulty, we omit it for convenience and only note here some metric interpretation. 

When $(Y,D)$ is a pair of log Fano cone singularity, K-stability is equivalent to $(Y,D)$ admitting Ricci-flat Kähler cone metrics on $Y \backslash D$ with cone singularities over $D$. This has been known from the works \cite{MSY08, CS19, Li21, Hua22} (see also \cite{dBL22, Ngh25} when the log pair has a spherical action, and \cite{GP16, OS15} for the compact case). Moreover, it is also known that it suffices to test K-stability of log Fano cone singularities on special test configurations \cite{LW25}. 

When the cone is not a log Fano singularity but still $\Qbb$-Gorenstein, our result implies that K-stability is equivalent to the (singular) Sasaki base admitting Sasaki--Einstein metrics of the same Einstein constant as the slope sign. This has been known when $Y \backslash \set{0}$ is smooth, in which case the Reeb cone is one-dimensional \cite[Theorem 8.1.14]{BG08}, hence K-stability of $Y$ boils down to K-stability of a log Calabi--Yau pair or a log pair of general type.  

In general, the stability of a cone pair $(Y,D)$---not assumed to be $\Qbb$-Gorenstein, should be related to existence of constant scalar curvature Sasaki metrics on the (singular) Sasaki link of $Y$ with conical singularities along the Sasaki base of $D$; see \cite{CS18, ACL21} for some partial results in the metric-to-stability direction. The K-stability notion for a pair also goes under the name of log K-stability \cite{OS15, AHZ25}.



\subsection{Towards singularities of cscK cones}
Throughout the proof of Theorem \ref{theorem:main}, we have crucially used the $\Qbb$-Gorenstein condition to define discrepancy \cite{KM98}. As it turns out, K-semistability can still be defined without the $\Qbb$-Gorenstein assumption \cite{CS18} (see also \cite{Wu22} for a definition of Monge--Ampère energy for an arbitrary polarized cone). Moreover, K-semistability is implied by the existence of cscK cone metrics on $Y \backslash \set{0}$ \cite[Corollary 1.1]{CS18}. Examples of non-$\Qbb$-Gorenstein cscK toric cones have been built by E. Legendre \cite{Leg11}. An interesting question is to see whether K-stability of a general cone imposes restrictions on singularities in a more general sense.  


Note that when $Y$ is a Ricci-flat cone in the sense of \cite{EGZ}, then $\Qbb$-Gorenstein follows from van Coevering's theorem when $Y$ has an isolated singularity \cite{vC10}. A recent work of Hallgren--Székelyhidi shows that this also holds when the singularities are defined in a reasonable sense that includes the klt case  \cite{HS25}. In contrast with these results, 
our following remark suggests that a K-stable cone with cscK but not Ricci-flat metric is never $\Qbb$-Gorenstein, no matter how we define a priori singular cscK metrics so that they coincide with the klt notion in \cite{EGZ}.

\begin{thm} \label{theorem:cscK_gorenstein}
Assume that $(Y,T)$ is a K-stable $\Qbb$-Gorenstein cone with K-stable Reeb vector $\xi$. Assume also that any pair of cscK cone metrics with the same Reeb vector $\xi$ is related by an automorphism preserving the Reeb vector. 
Then any cscK cone metric on $Y \backslash \set{0}$ with Reeb vector $\xi$ is Ricci-flat.  
\end{thm}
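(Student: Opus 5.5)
Our plan is to combine Theorem \ref{theorem:main} with the uniqueness hypothesis, using a rigidity (scaling) argument for cone metrics. By Theorem \ref{theorem:main}, K-stability of the $\Qbb$-Gorenstein cone $(Y,T,\xi)$ gives a trichotomy according to the sign of the slope $a_1(\xi)$. We treat the cases separately: in the Fano cone case ($a_1(\xi)>0$) we show directly that a cscK cone metric must be Ricci-flat. In the cases $a_1(\xi)\le 0$ we show that no cscK cone metric exists at all, so the conclusion holds vacuously.

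First, recall that a cscK Kähler cone metric $\omega = \tfrac{i}{2}\del\delb r^2$ with Reeb vector $\xi$ is automatically scalar-flat. Indeed, the scalar curvature of a cone scales as $r^{-2}$ times the transverse scalar curvature minus a constant, so constancy forces it to vanish. Scalar-flatness of the cone is equivalent to the transverse (Sasaki) metric having constant scalar curvature $S^T = 2n(2n+1)$, positive in real dimension normalization. On the $\Qbb$-Gorenstein cone, pick a nowhere-vanishing $T$-equivariant section $\Omega$ of $mK_Y$ on $Y\backslash\set{0}$, with $\mathcal{L}_\xi \Omega = i m\lambda \Omega$ for some $\lambda$. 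A standard computation (as in \cite{MSY08}) identifies the sign of $\lambda$ with the sign of $a_1(\xi)$, via Proposition \ref{prop:indexcharexpansion}. The Ricci form of the cone then differs from $\tfrac{1}{m}i\del\delb\log(\omega^{n+1}/|\Omega|^{2/m})$ by a term controlled by $\lambda$.

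Second, in the cases $a_1(\xi)\le 0$, the transverse first Chern class is $c_1^B = \lambda[d\eta]$ with $\lambda\le0$. Integrating the transverse scalar curvature against $d\eta^n$ gives the average scalar curvature $\propto \lambda$, which must equal the positive constant $2n(2n+1)$, a contradiction. Hence no cscK cone metric exists, and the statement is vacuous.

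Third, the main case $a_1(\xi)>0$, where $Y$ is a Fano cone singularity by Theorem \ref{theorem:main}. After normalizing $\xi$ so that $\lambda = n+1$, K-stability and \cite{CS19} (together with \cite{Li21, Hua22} in the singular klt setting) provide a Ricci-flat Kähler cone metric $\omega_{RF}$ with Reeb vector $\xi$. Since $\omega_{RF}$ is Ricci-flat, it is in particular scalar-flat, hence cscK. By hypothesis, any cscK cone metric $\omega$ with Reeb vector $\xi$ satisfies $\omega = \sigma^*\omega_{RF}$ for an automorphism $\sigma$ preserving $\xi$, and pullbacks of Ricci-flat metrics are Ricci-flat.

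The main obstacle is justifying the existence of $\omega_{RF}$ with the \emph{same} Reeb vector $\xi$ in the singular setting. This requires matching the K-stable Reeb vector of Theorem \ref{theorem:main} with the volume-minimizing one; both are unique up to scaling by Theorem \ref{theorem:main}. It also requires fixing the scaling normalization, which is harmless because scaling $\xi$ only rescales $r$. A secondary check is the sign identification in the second step: the relation between $\lambda$ and $a_1(\xi)$ from Proposition \ref{prop:indexcharexpansion} must be verified carefully.
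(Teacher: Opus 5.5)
Your argument is the paper's. A cscK cone metric is scalar-flat with positive transverse scalar curvature, which forces $a_1(\xi)>0$; your Chern-class averaging just makes explicit what the paper asserts in one line. Theorem~\ref{theorem:main} then makes $Y$ a K-stable Fano cone singularity, which carries a Ricci-flat cone metric with Reeb vector $\xi$, and the uniqueness hypothesis transports Ricci-flatness to every cscK metric.

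One correction to your last paragraph. Rescaling $\xi\mapsto c\xi$ (with $r\mapsto r^{1/c}$) multiplies the transverse scalar curvature by $c$ and so destroys scalar-flatness; the normalization $\lambda(\xi)=n+1$ is therefore not a harmless convention. It is, however, forced: your second-step averaging identity, used with equality rather than only for its sign, shows that a cscK cone metric with Reeb vector $\xi$ already implies $\lambda(\xi)=n+1$. That is exactly what guarantees that $\omega_{RF}$ has the \emph{same} Reeb vector $\xi$, so the uniqueness hypothesis applies.
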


\begin{rmk}
Here, our arguments rely on a strong uniqueness modulo automorphisms assumption. The latter holds when the Sasaki link is smooth (i.e., the cone has an isolated singularity) \cite{BB17, HL21}. We expect this uniqueness property to also hold in the singular case (details will appear elsewhere).     
\end{rmk}

In the toric examples built in \cite{Leg11}, $K_Y$ is not $\Qbb$-Cartier but $K_Y +D$ is always $\Qbb$-Cartier for a suitable choice of an effective boundary $D$, which can be modified so that $(Y,D)$ is log canonical \cite{CLS11}. 
Based on this evidence, we expect the following: \emph{if $Y$ is K-semistable, then there is an effective $\Qbb$-boundary $D$ such that $(Y,D)$ is log canonical (in particular $(K_Y+D)$ is $\Qbb$-Gorenstein)}. In this situation, the pair $(Y,Y_{sing})$ is said to be log canonical in the sense of de Fernex--Hacon \cite[Definition 7.1]{dFH09}.



\subsection{Organization}
In Section \ref{section:cones}, we recall the definition of a normal $\Qbb$-Gorenstein affine cone and several of its properties needed in our paper. We then recall their K-stability notion in the sense of Collins--Székelyhidi. A crucial result in this section (Theorem \ref{theorem:conekstab_vs_weightedkstab}) is the correspondence between test configurations of a $\Qbb$-Gorenstein cone, and the equivariant test configurations of an arbitrary quasi-regular quotient. 

Finally, in Section \ref{section:proof_main}, we prove Theorem \ref{theorem:main} and justify Theorem \ref{theorem:cscK_gorenstein}. Besides some technical points, our main strategy is inspired by Odaka's \cite{Oda13}, with the main ingredient being the existence of an \emph{equivariant} log canonical model \cite{OX12}. 
\\
\textbf{Acknowledgement.} 
I am thankful to Yuji Odaka for helpful suggestions, and E. Legendre, R. Reboulet, S. Jubert, Y. Wu, V. Guedj, T-D. To and C. Pan; the discussions with whom led to improvements on various parts of the manuscripts. The author is supported by the ANR–FAPESP-21-CE40-0017 project BRIDGES. 

\section{Affine cones} \label{section:cones}

\subsection{Definitions and examples}

Let $Y$ be a normal affine variety of complex dimension $n+1$ with an effective torus action of $T$. 
Let $R$ be the coordinate ring of $Y$; and $M = \text{Hom}(T, \Cbb^{*})$ (resp. $N = M^{\vee}$) be the weight lattice (resp. coweight lattice) of $T$ so that $M \simeq \Zbb^d$; also denote by $M_{\Rbb}, N_{\Rbb} \coloneqq M \otimes_{\Zbb} \Rbb, N \otimes_{\Zbb} \Rbb$. 

Under the $T$-action, $R$ has the following decomposition simple $T$-modules 
\begin{equation}
 R = \oplus_{\alpha} R_{\alpha},  
\end{equation}
where the set $\set{\alpha \in M, R_{\alpha} \neq 0}$ has a finite generator set $S$, the cone generated by which is called the \emph{moment cone} of $(Y,T)$, denoted by $\sigma$. The interior of $\sigma^{\vee}$ is called the \emph{Reeb cone} of $(Y,T)$, denoted by $C_R$. 
The terminology is inspired by the differentio-geometric notion of Reeb vector fields for Kähler cones; see \cite{ACL21} for a comprehensive treatment from this point of view.


We say that the $T$-action on $Y$ is \emph{good} if it acts effectively with a unique fixed point $0$, contained in the closure of any $T$-orbit. The fixed point is often called an attractive fixed point in the literature \cite{PS11}. A
good action can be characterized as follows. 

\begin{prop}[{\cite{PS11, Oda24a}}] \label{prop:goodaction}
Let $Y$ be a $T$-variety. The following are equivalent
\begin{itemize}
    \item The $T$-action is good.
    \item The moment cone $\Rbb_{\geq 0} \sigma$ is strictly convex of maximal dimension in $M_{\Rbb}$. 
    \item The Reeb cone $C_R$ is strictly convex of maximal dimension (hence non-empty). 
\end{itemize}
\end{prop}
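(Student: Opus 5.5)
We prove the cycle of implications (good action) $\Rightarrow$ (moment cone strictly convex and full-dimensional) $\Leftrightarrow$ (Reeb cone strictly convex and full-dimensional) $\Rightarrow$ (good action). Throughout we use the graded decomposition $R=\oplus_\alpha R_\alpha$. Since $Y$ is normal, $R$ is a domain, so $R_\alpha R_\beta\neq 0$ whenever $R_\alpha,R_\beta\neq 0$. Hence the weight set $\Lambda=\set{\alpha\in M : R_\alpha\neq 0}$ is a finitely generated monoid, and $\sigma=\Rbb_{\geq 0}\Lambda$ is a rational polyhedral cone.

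\textbf{Equivalence of the two cone conditions.} This is convex duality. For a closed convex cone $\sigma\subset M_\Rbb$, $\sigma$ is full-dimensional iff $\sigma^\vee$ is strictly convex, and $\sigma$ is strictly convex iff $\sigma^\vee$ is full-dimensional. Full-dimensionality of $\sigma^\vee$ is the same as $C_R=\operatorname{int}\sigma^\vee\neq\emptyset$. Also, $C_R$ is strictly convex exactly when its closure $\sigma^\vee$ is (we interpret strict convexity of the open cone this way).

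\textbf{Good $\Rightarrow$ cone conditions.} Effectiveness of the $T$-action means the characters in $\Lambda$ generate $M$ as a group: otherwise a nontrivial subtorus fixing all these characters would act trivially on $R$. Since $\Lambda$ spans $M_\Rbb$, the cone $\sigma$ is full-dimensional.

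For strict convexity, suppose $\sigma$ contains a line. Then some $0\neq\alpha$ has both $\alpha$ and $-\alpha$ in $\sigma$, and after scaling $\alpha,-\alpha\in\Lambda$ (saturation of the monoid in its rational cone up to multiples). Pick nonzero $f\in R_\alpha$ and $g\in R_{-\alpha}$. Then $fg\in R_0$ is nonzero, since $R$ is a domain.

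Now $R_0=R^T$, and $R_0=\Cbb$ when the action is good: a $T$-invariant function is constant on each orbit closure, and every orbit closure contains the fixed point $0$. So $fg$ is a nonzero constant, making $f$ a unit of $R$. But $f$ has nonzero weight, so $f(0)=0$, because $f(t\cdot 0)=\alpha(t)f(0)$ for all $t$. A unit cannot vanish at a point, a contradiction.

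\textbf{Cone conditions $\Rightarrow$ good.} Pick an integral $\xi\in C_R\cap N$. Then $\langle\alpha,\xi\rangle>0$ for all $0\neq\alpha\in\sigma$. This uses strict convexity of $\sigma$: a vector in the interior of $\sigma^\vee$ is strictly positive on $\sigma\setminus\set{0}$. The one-parameter subgroup $\lambda_\xi$ therefore induces a positive grading $R=\oplus_{k\geq 0}R'_k$, where $R'_k=\oplus_{\langle\alpha,\xi\rangle=k}R_\alpha$ and $R'_0=R_0$.

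We need $R_0=\Cbb$. This is the main obstacle: it does not follow from strict convexity alone, since $0\in\sigma$ always. We deduce it from effectiveness together with the finite generation and the meaning of full-dimensionality; the moment cone is meant to encode that $R_0$ is trivial. Following \cite{PS11}, we read the hypothesis as including $R_0=\Cbb$, i.e. $\sigma$ is pointed with $0$ occurring only with multiplicity one, and we make this explicit in the write-up.

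Given $R_0=\Cbb$, let $\mathfrak m=\oplus_{k>0}R'_k$; this is a $T$-stable maximal ideal, and we let $0$ be the corresponding point. For any $y\in Y$, the limit $\lim_{s\to 0}\lambda_\xi(s)\cdot y$ exists and equals $0$, since each homogeneous generator of positive degree is multiplied by $s^k\to 0$. Thus $0$ lies in the closure of every $T$-orbit. It is the unique fixed point, because any fixed point must be a fixed point of this limit process, hence equal to $0$. Effectiveness is given, so the action is good.
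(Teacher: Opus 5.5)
The paper gives no proof of this proposition; it only cites \cite{PS11, Oda24a}, so your argument has to stand on its own. Three of your steps are correct. The implication from goodness to the cone conditions holds: $R_0=R^T=\Cbb$ because invariants are constant on orbit closures through $0$, and a weight vector of nonzero weight vanishes at the fixed point, so it cannot be a unit. The duality between the second and third items is fine. One small slip: if $\Lambda$ generated a proper finite-index sublattice, the common kernel would be a nontrivial finite subgroup, not a subtorus. This does not affect the spanning statement you actually use.

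The genuine gap is the step you flag yourself in the converse. You assert that $R_0=\Cbb$ can be deduced ``from effectiveness together with the finite generation and the meaning of full-dimensionality''. That sentence is false, and no argument can supply it, because the converse fails as the statement is literally written. Take $Y=\Cbb^2$ with $T=\Cbb^*$ acting by $t\cdot(x,y)=(x,ty)$. The action is effective and $R_k=\Cbb[x]\,y^k$ for $k\geq 0$. So $\sigma=\Rbb_{\geq 0}$ is strictly convex and full-dimensional, and $C_R=\Rbb_{>0}$. Yet the fixed locus is the whole line $\set{y=0}$, and $R_0=\Cbb[x]$. More generally, take the product of a good cone with any positive-dimensional affine variety carrying the trivial action.

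So the condition $R_0=R^T=\Cbb$ is an independent hypothesis that must be added to the second and third items; it is not a ``reading'' of them. Equivalently, every weight space $R_\alpha$ is finite-dimensional, or, in Altmann--Hausen terms, the base of the polyhedral divisor is projective. This is also exactly the finiteness that makes the index character $F(\xi,t)$ well defined. Once you state the hypothesis explicitly and replace the incorrect sentence with the counterexample, your final paragraph is complete. There you grade positively by an integral $\xi\in C_R$, take the maximal ideal $\mathfrak m=\oplus_{k>0}R'_k$, and show $\lambda_\xi(s)\cdot y\to 0$. That proves the corrected equivalence.
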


 This characterization, together with Altmann--Hausen theory \cite{AH06}, allows us to easily produce examples of affine varieties without good $T$-action. The simplest case is of course to take $T = \Cbb^{*}$ and $\sigma = \set{0}$; cf.\cite[Section 4.3]{PS11}. 

Recall that for a normal variety $Y$ with inclusion of the regular locus $\iota \colon Y^{\text{reg}} \hookrightarrow Y$, we can define a canonical sheaf to be 
\[ \Kcal_Y \coloneqq \iota_{*}\Kcal_{Y^{\text{reg}}}, \]
which extends to an invertible sheaf over $Y$ by Hartogs' lemma, so there is a Weil divisor $K_Y$ such that $\Kcal_Y = \Ocal(K_Y)$, which we call the \emph{canonical divisor} of $Y$. 

\begin{defn} \label{defn:normalaffinecone}
\hfill
\begin{itemize}
\item (Normal affine cone) Given a normal affine variety $Y$ with a good $T$-action,
we call the pair $(Y,T)$ a \emph{normal affine cone}. A pair decorated with a Reeb vector $(Y,T,\xi)$ is a \emph{polarized cone}. 

\item ($\Qbb$-Gorenstein cone) The normal affine cone is said to be \emph{$\Qbb$-Gorenstein} if $K_Y$ is $\Qbb$-Cartier; namely there is $m \in \Nbb$ such that $mK_Y$ is a Cartier divisor. A $\Qbb$-Gorenstein cone with klt singularities is called a \emph{Fano cone singularity}.

\item We say that $\xi$ is \emph{quasi-regular} if there is $c \in \Rbb$ such that $c \xi \in \Nbb^k$; otherwise $\xi$ is said to be \emph{irregular}. Equivalently, $\xi$ is quasi-regular if it generates a $S^1$-action of rank $1$; if this action is moreover free, we say that $\xi$ is \emph{regular}.
\end{itemize}
\end{defn}

\begin{rmk}
\hfill
\begin{enumerate}
\item In \cite[Definition 2.9 (iii)]{Oda24a}, a $\Qbb$-Gorenstein cone is defined to be a reduced scheme which is normal crossing in codimension $1$, satisfying Serre's $S_2$ condition, with $\Qbb$-Cartier canonical bundle. The Serre's $S_2$ condition is implied by our normality assumption ; see \cite[Chap. IV.4]{Ser65}. 

\item We distinguish between the conditions \emph{Gorenstein} and \emph{$\Qbb$-Gorenstein of index $1$}. 
The former means that $K_Y$ is Cartier and $Y$ is Cohen--Macaulay (CM) (cf. \cite[Definition 2.1.1]{BH93}); while the latter only means that $K_Y$ is Cartier. A $\Qbb$-Gorenstein klt singularity is CM (in particular a Fano cone singularity is CM); but a cone over a polarized abelian variety of dimension at least $2$ is not CM, although it is $\Qbb$-Gorenstein of index $1$. We refer the reader to \cite[Corollary 3.4]{Kol13} for several CM criteria and examples. 
\end{enumerate}
\end{rmk}

\begin{ex}
Recall that a Riemannian cone $C$ is a Riemannian manifold $\Rbb^{+} \times L$ with the metric 
\[ g_c \coloneqq dr^2 + r^2 g_L, \]
$(L,g_L)$ being compact Riemannian manifold. It is called a \emph{Kähler cone} if there is a complex structure $J_c$ on $C$ such that $\omega_c = g(.,J_c.) = \frac{i \del \delb}{2} r^2$ is Kähler. 
Then the completion $Y$ of $C$ by any Kähler cone metric $g$ has a unique structure of \emph{normal affine variety} with a $T$-action \cite[Theorem 3.1]{vC11}; namely, there is a $T$-equivariant embedding of $Y \hookrightarrow \Cbb^N$ such that $T$ acts on the embedding by diagonal matrices. Moreover, the moment cone in the algebraic sense $\sigma$ can be identified with the symplecto-geometric moment cone under the action of the maximal compact torus $T_c \subset T$ \cite{CS18}, which is always full-dimensional.

Conversely, any normal affine cone $(Y,T)$ such that $Y \backslash \set{0}$ is smooth can be realized as a Kähler cone $(C,g_c,J_c)$ so that $C$ is diffeomorphic to $Y \backslash \set{0}$. In fact by normality of $Y$, given a quasi-regular Reeb vector $\xi_0$ on $Y$ we can embed $Y$ in a $T$-equivariant manner inside $\Cbb^N$, so that $T$ acts diagonally on $\Cbb^N$ \cite{Sum74} and the $\xi_0$-action is given by 
\[ \xi_0(z_1,\dots,z_N) = (e^{iw_i}z_1,\dots, e^{iw_N}z_N).\]
We can then define a Kähler cone metric on $Y \backslash \set{0}$ by setting 
\[ \omega \coloneqq \frac{i \del \delb}{2} r^2, \quad r^2 \coloneqq \sum_{i=1}^N \abs{z_i}^2.\]
\end{ex}




On the other hand, we can view a normal affine cone as the total space of polarizations over a projective variety. 
To be precise, let $X$ be a projective variety with a very ample line bundle $L$; then the \emph{affine cone} with respect to $(X,L)$ is defined as 
\begin{equation}
 Y \coloneqq C(X,L) \coloneqq \text{Spec} \bigoplus_k H^0(X,kL).    
\end{equation}
Further, given an embedding $X \hookrightarrow \Pbb^N(H^0(X,L))$, we can define $X$ as the zero locus of a family of homogeneous polynomials $f_1,\dots,f_k$; and $C(X,L)$ is the zero locus inside $\Cbb^{N+1}$. 

\begin{thm} \label{theorem:cone-base-singularities}
Let $Y$ be a normal affine cone and $\xi_0$ be a quasi-regular Reeb vector. Then the GIT quotient $Y - \set{0} / \xi_0$ has the structure of a normal polarized pair $(X,B;L)$, where $X$ is the ambient normal projective variety, $B = \sum b_i B_i$ is an effective $\Qbb$-Weil divisor on $X$ with $0\leq b_i < 1$, and $L \to X$ is an ample line bundle such that
\[ Y = C(X,L). \]
Moreover, let $D$ be the pullback divisor of $B$ on $Y$; then $(Y,D)$ is $\Qbb$-Gorenstein iff $\gamma L \sim_{\Qbb} K_{X,B}$ for some $\gamma \in \Qbb$. In which case, 
\begin{itemize} 
    \item $(Y,D)$ is log canonical iff $\gamma \leq 0$ and $(X,B)$ is log canonical. 
    \item $(Y,D)$ is klt iff $\gamma < 0$ (i.e., $(X,B)$ is a log Fano variety) and $(X,B)$ is klt. 
\end{itemize}
If, in addition, $K_Y$ is $\Qbb$-Cartier then the same assertions on singularities hold for $Y$. 
\end{thm}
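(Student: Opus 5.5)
This is the standard Seifert $\Cbb^*$-bundle / Demazure description of quasi-regular cones, and I would reduce to it in four steps.

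\emph{Step 1: the quotient and the boundary.} Rescale $\xi_0$ so that it generates a $\Cbb^*$-action $\lambda \colon \Cbb^* \to T$ with positive weights on $R$. This is possible because $\xi_0$ lies in the Reeb cone $C_R$, which is strictly convex of maximal dimension by Proposition \ref{prop:goodaction}. The induced grading $R = \oplus_{k \geq 0} R_k$ has $R_0 = \Cbb$, and we set
\[ X \coloneqq \mathrm{Proj}\, R = (Y \backslash \set{0})/\Cbb^* .\]
Then $X$ is normal and projective. By Demazure's theorem (or Altmann--Hausen \cite{AH06} with a one-dimensional torus), there is an ample $\Qbb$-Cartier $\Qbb$-divisor $L$ on $X$ with
\[ R = \oplus_k H^0(X, \Ocal(\lfloor kL \rfloor)). \]
Write $L = \sum \frac{p_i}{q_i} B_i + (\text{integral part})$ with $\gcd(p_i,q_i)=1$. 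The $B_i$ are exactly the images of the divisors of $Y$ along which the stabilizer of the $\Cbb^*$-action is $\mu_{q_i}$. Set $B = \sum (1 - 1/q_i) B_i$, so that $0 \le b_i < 1$. After replacing $\xi_0$ by a suitable multiple (i.e.\ passing to a Veronese subring, which is how I read the statement $Y = C(X,L)$ with $L$ a line bundle), $L$ becomes Cartier. The quotient map $\pi \colon Y\backslash\set{0} \to X$ is then a Seifert $\Cbb^*$-bundle, ramified to order $q_i$ over $B_i$.

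\emph{Step 2: the $\Qbb$-Gorenstein criterion.} The Hurwitz-type formula for the Seifert bundle gives
\[ K_{Y\backslash\set{0}} = \pi^*(K_X + B), \]
where the $B$-term is exactly the ramification correction. Hence $K_Y + D$ is $\Qbb$-Cartier on $Y$ iff some multiple of $\pi^*(K_X+B)$ extends to a Cartier divisor across the apex. Since $\mathrm{Pic}(Y)$ and the class groups are governed by the grading, $\pi^*M$ is torsion in $\mathrm{Cl}(Y)$ iff $M \sim_{\Qbb} \gamma L$. This is Kollár's computation \cite[Prop.~3.14]{Kol13}, and it gives the criterion $\gamma L \sim_{\Qbb} K_X + B$.

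\emph{Step 3: discrepancies.} Away from the apex, $Y$ is locally (analytically, up to a finite quotient by $\mu_{q_i}$) the product $X \times \Cbb^*$, with $D$ corresponding to $B$. Discrepancies of divisors not centered at $0$ therefore match those of $(X,B)$ (cf.\ \cite[Lemma 3.1]{Kol13}). For the apex, blow up the zero section: the partial resolution $\tilde Y = \mathrm{Spec}_X \oplus_k \Ocal(kL)$, the total space of $L^{-1}$, has exceptional divisor $E \cong X$. Adjunction on $\tilde Y$ gives
\[ K_{\tilde Y} + \tilde D = f^*(K_Y+D) + a E \quad\text{with}\quad a = -1 - \gamma , \]
using $E|_E = -L$. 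Hence $a(E) \ge -1 \iff \gamma \le 0$ and $a(E) > -1 \iff \gamma < 0$. Any other divisor centered at $0$ is exceptional over $\tilde Y$. Since $(\tilde Y, \tilde D + (1+a)E)$ is log canonical (resp.\ plt) iff $(X,B)$ is log canonical (resp.\ klt), by inversion of adjunction along the smooth fibration $\tilde Y \to X$, its discrepancy is controlled. This proves both bullets. When $K_Y$ itself is $\Qbb$-Cartier, the same computation runs with $D$ omitted on the $Y$-side, since $K_Y = \pi^*(K_X+B)$ still holds away from $0$.

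The main obstacle is Step 2, together with the apex discrepancy bookkeeping in the non-regular (orbifold) case. The weights $1-1/q_i$ and the non-Cartier nature of $L$ must be tracked carefully. I would handle this by working on the normalized weighted blowup $\tilde Y$, i.e.\ the Seifert partial resolution, and citing \cite[Section 3.1]{Kol13} for the adjunction formulas, rather than reproving them.
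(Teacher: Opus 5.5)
Your route is the paper's: a Seifert $\Cbb^*$-bundle with branch divisor $B=\sum(1-1/q_i)B_i$, and the $\Qbb$-Gorenstein criterion from $\mathrm{Pic}(Y)=0$ and $\mathrm{Cl}(Y)=\mathrm{Cl}(X)/\langle L\rangle$. Then the vertex blow-up with $E\cong X$ and $a(E)=-1-\gamma$, with the remaining divisors controlled by $(X,B)$ along $\wt Y\to X$.

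\textbf{The step for divisors centred inside $E$ is mis-stated.} From $K_{\wt Y}+\wt D=f^*(K_Y+D)+aE$, the crepant pullback of $(Y,D)$ is $(\wt Y,\wt D-aE)=(\wt Y,\wt D+(1+\gamma)E)$. It is not $(\wt Y,\wt D+(1+a)E)=(\wt Y,\wt D-\gamma E)$. Your claim that the latter is lc iff $(X,B)$ is lc is false whenever $\gamma<-1$. For instance, $Y=\Cbb^{n+1}$, $X=\Pbb^n$, $\gamma=-(n+1)$ puts coefficient $n+1$ on $E$. The correct argument applies inversion of adjunction to $(\wt Y,\wt D+E)$, which is lc (resp.\ plt) near $E$ iff $(X,B)$ is lc (resp.\ klt). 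Then, when $\gamma\le 0$, use that every $F$ exceptional over $\wt Y$ and centred in $E$ satisfies $a(F;Y,D)=a(F;\wt Y,\wt D+E)+(-\gamma)\,\mathrm{mult}_F E\ge a(F;\wt Y,\wt D+E)$. With that fix your argument is more careful than the paper's. The paper's formula $\mathrm{discrep}(Y,D)=\min\set{-1-\gamma,\mathrm{discrep}(\wt Y,\wt D)}$ does not account for the term $-(1+\gamma)\,\mathrm{mult}_F E$.

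\textbf{The Veronese device does not make $L$ Cartier for the given cone.} Rescaling $\xi_0$ leaves $R$ unchanged, while $\bigoplus_k R_{ck}$ is the coordinate ring of $Y/\mu_c$, not of $Y$. When $B\neq 0$, the $\xi_0$-action has stabilisers $\mu_{q_i}$ along the preimages of the $B_i$, so $L$ is genuinely fractional. Then $\wt Y=\mathrm{Spec}_X\bigoplus_{k\ge 0}\Ocal_X(\lfloor kL\rfloor)$ has cyclic quotient singularities along the part of $E$ lying over $\mathrm{Supp}\,B$. It is not the total space of a line bundle, and $\wt Y\to X$ is not smooth. So $E|_E=-L$ and the fibration argument hold only in the orbifold sense. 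Your closing suggestion (work on the Seifert partial resolution with Koll\'ar's formulas) is the right repair. The paper has the same gap, since it also treats $\wt Y\to X$ as a smooth line bundle.

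\textbf{Relatedly, the bookkeeping of $D$ is inconsistent.} Your Hurwitz formula $K_{Y\backslash\set{0}}=\pi^*(K_X+B)$ already absorbs $B$ into $K_Y$. So Step 2 proves the criterion for $K_Y$, not for $K_Y+D$: on $Y\backslash\set{0}$ one has $K_Y+D=\pi^*(K_X+B)+D$. The paper uses the opposite bookkeeping, $K_{\wt Y}+\wt D+E\sim_{\Qbb}\pi^*K_{X,B}$, and the two agree only when $B=0$. You need to fix which version of the statement you are proving.
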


\begin{proof}
The statement gathers known results in the literature \cite{RT11, Kol04, LL19, LWX, Kol13}. We record a proof here for convenience. 
Let $R$ be the coordinate ring of $Y$. 
 Since $\xi$ generates an effective $\Cbb^{*}$-action, $Y \backslash \set{0} \to X $  is equivariantly isomorphic to the total space (with zero section removed) of an orbiline bundle $L$ over an orbifold, whose underlying analytic space is homeomorphic to the variety $X = \text{Proj}(R)$. The bundle can be built  by taking $L \coloneqq X \times_{\Cbb^{*}} \Cbb$ (cf. \cite[Proposition 3.1.3]{Bri18}). Normality of $Y$ implies normality of $X$ \cite{MFK94}.
 
 In the terminology of \cite{OW75, Kol04}, $f\colon Y  \backslash \set{0} = L \backslash X \to X$ is a \emph{Seifert $\Cbb^{*}$-bundle} over $X$. The $\Cbb^*$-action generated by $\xi_0$ on every fiber $f^{-1}(p)$ is isomorphic to the $\Cbb^*$-action on $\Cbb^*/\mu_m$ for some $m = m(p,Y/X)$ where $\mu_m$ is the $m$-th group of unity. Moreover, 
 \begin{itemize}
 \item The set $\set{m(.,Y/X) > 1} \subset X$ is a closed analytic subset of $X$ which is union of Weil divisors $\cup_i B_i$ and a subset of codimension at least $2$, all contained in $X^{sing}$. 
 \item The multiplicity $m(.,Y/X)$ is constant on a dense subset of each $B_i$, whose common value $m_i \coloneqq m(B_i)$ is the multiplicity over $B_i$.
 \end{itemize}
 The $\Qbb$-Weil divisor 
 \[ B \coloneqq \sum (1- \frac{1}{m_i})B_i \]
is often called the \emph{branch divisor} of $X$.  

We have $\text{Pic}(Y) = 0$ and $\text{Cl}(Y) = \text{Cl}(X)/ \sprod{L}$ (cf. \cite[Proposition 3.14]{Kol13}); hence $K_{Y,D}$ is $\Qbb$-Cartier iff $ m K_{X,B} \sim_{\Qbb} \gamma L$ for some $\gamma \in \Nbb$. 
 Let $p \colon \wt{Y} \to Y$ be the blow-up of $Y$ at the vertex with exceptional divisor $E \simeq X$; and $\wt{D}$ be the pullback of $D$. If $K_{Y,D}$ is $\Qbb$-Cartier, then the line bundle $\pi \colon \wt{Y} \to X$ satisfies
\[ K_{\wt{Y}, \wt{D}} + E \sim_{\Qbb}  \pi^{*} K_{X,B} \sim_{\Qbb} \gamma \pi^{*}L \sim_{\Qbb} -\gamma E. \]
Here the first equivalence is essentially due to the fact that the canonical form on $X$ has pole of order $1$ along $E$. 
But we also have $\text{Pic}(Y) = 0$, so if $K_{Y,D}$ is $\Qbb$-Cartier then actually $ 0 \sim_{\Qbb} K_{Y,D}$, hence
\[ K_{\wt{Y}, \wt{D}} + (1+\gamma)E \sim_{\Qbb} 0 \sim_{\Qbb} p^{*} K_{Y,D}. \]
It follows that $\text{discrep}(Y,D) = \min \set{-1-\gamma, \text{discrep} (\wt{Y}, \wt{D})}$. Since $(\wt{Y}, \wt{D}) \to (X,B)$ is a surjective smooth morphism, we also have the equality 
\[ \text{discrep}(\wt{Y},\wt{D}) = \text{discrep}(X,B), \]
cf. \cite[(2.14)]{Kol13}. 
The assertions about singularities of $(Y,D)$ then follow. 

Finally, if $K_{Y,D}$ is $\Qbb$-Cartier then $K_{Y,D} \sim_{\Qbb} 0$, so $K_Y$ is $\Qbb$-Cartier iff $D$ is $\Qbb$-Cartier (hence $D \sim_{\Qbb} 0$). In this case, we have $a(E,Y) = a(E,Y,D)$ for every exceptional divisor $E$ such that $\text{center}_Y(E) \subsetneq \text{supp}(D)$. We then conclude about the singularities of $Y$. 
\end{proof}

\begin{rmk}
When taking quasi-regular quotient, the underlying algebraic variety $X$ and the branch divisor $B$ vary with the Reeb vectors; i.e. the orbifold structure on $X$ and the $\Cbb^*$-action on $L$ vary. However, the total space of the line bundle $L$ stays the same.
\end{rmk}

\begin{ex}
Recall that a weighted projective space $\Pbb^n (w)$ is defined as the quotient $(\Cbb^{n+1}-\set{0})/ \Cbb^{*}$, where $w = (w_0,\dots,w_n)$ is a sequence of positive integers and the $\Cbb^{*}$-action on $\Cbb^{n+1}$ is 
 \[ \lambda(z_0,\dots,z_n) = (\lambda^{w_0}z_0,\dots,\lambda^{w_n} z_n).\] 
Any polarized normal affine cone $(Y,T, \xi)$ can be seen as an affine subvariety of $\Cbb^N$, where $T$ is faithfully represented a subtorus of $GL(N,\Cbb)$ and the $\xi \in \mathfrak{t}$ generates the $\Cbb^{*}$-action as above. Moreover any quasi-regular quotient is a weighted projective subvariety of $\Pbb^n(w)$. 

As a complex variety, $\Pbb^n(w)$ is isomorphic to $\Pbb^n / (\Zbb_{w_1} \times \dots \times \Zbb_{w_n})$, but for all $w$ the total spaces $L \backslash \Pbb(w)$ are isomorphic to $\Cbb^{n+1}-\set{0}$ (albeit endowed with different $\Cbb^{*}$-actions), hence independent of the polarization $\xi$. 
\end{ex}

\begin{ex}
Let $(X,L)$ be a projective variety with trivial canonical line bundle. Then $C(X,L)$ is always $\Qbb$-Gorenstein because $K_X \sim 0$. If moreover $X$ has log canonical singularities, then $Y$ is always log canonical but $Y$ is never klt since $\gamma = 0$.
\end{ex}

\subsection{Automorphisms group of affine cones}
We provide some results relating some automorphisms group of $Y$ to a quasi-regular quotient. They will be useful in the proof of Theorem \ref{theorem:main}. 

Let $(Y,\xi_0)$ be the normal affine cone polarized by a quasi-regular Reeb vector $\xi_0$, and $(X,B;L)$ the corresponding normal polarized pair. 
We denote by $Aut(Y,\xi_0)$ the automorphisms of $Y$ commuting with $\xi_0$ and $Aut(X,B;L)$ the automorphisms of $X$ preserving $B$ and $L$. 

\begin{lem} \label{lemma:automorphism}
There is an exact sequence of groups 
\begin{equation}
 1 \to \Cbb^{*} \to Aut(Y,\xi_0) \to Aut(X,B;L) \to 1.     
\end{equation}
In particular, the Reeb cone of $Y$ is one-dimensional iff $Aut(X,B;L)$ is finite. 
\end{lem}

\begin{proof}
 Let $\Cbb^{*}$ be the action generated by $\xi_0$ on the line bundle $L$, and $Aut^{\Cbb^{*}}(L)$ be the automorphisms of the variety $L$ that commutes with this $\Cbb^*$-action. Then by \cite[Lemma 3.4.1]{Bri18}, there is an exact sequence of groups 
 \begin{equation}
 1 \to \Cbb^{*} \to \text{Aut}^{\Cbb^{*}}(L) \to \text{Aut}(X,B;L) \to 1.     
\end{equation}
We conclude by the fact that there is an obvious isomorphism between $Aut^{\Cbb^{*}}(L)$ and $Aut(Y,\xi_0)$. 
\end{proof}

\begin{cor} \label{lemma:iitaka_finite_group}
Let $(X,B)$ be a variety of log general type and $Y$ is the cone with respect to the ample polarization $K_{X,B}$. Then the Reeb cone of $Y$ is one-dimensional.    
\end{cor}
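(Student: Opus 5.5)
The plan is to deduce this from Lemma \ref{lemma:automorphism}: its second assertion says the Reeb cone of $Y$ is one-dimensional if and only if $Aut(X,B;L)$ is finite. So it suffices to show that $Aut(X,B;K_{X,B})$ is finite when $(X,B)$ is of log general type and $L = K_{X,B}$ is the ample polarization.

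First I fix the setup. Choose $m$ such that $mK_{X,B}$ is Cartier and very ample, so that $Y = C(X,mK_{X,B})$. The quasi-regular Reeb vector $\xi_0$ is the one generating the grading of $\bigoplus_k H^0(X,kmK_{X,B})$. Any automorphism of $X$ preserving $B$ preserves $K_{X,B}$ up to linear equivalence. Hence $Aut(X,B;L)$ coincides with $Aut(X,B)$, up to the finite ambiguity of choosing the linearization.

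Next I show that $Aut(X,B)$ is finite. The group $G = Aut(X,B)$ acts linearly on the graded ring $\bigoplus_k H^0(X,kmK_{X,B})$, which is the log canonical ring. It therefore embeds in $PGL(H^0(X,mK_{X,B}))$ as the stabilizer of $X$ and $B$, so $G$ is a linear algebraic group. It remains to show that its identity component $G^0$ is trivial.

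If $G^0$ were nontrivial, it would contain either a one-parameter subgroup $\Cbb^*$ or a subgroup $\mathbb{G}_a$. In either case $X$ would be covered by rational curves, namely the orbit closures. These curves meet the support of $B$ in at most the two points at infinity, since the orbits avoid $B$ generically because $B$ is invariant. On a generic such orbit curve $C$, the degree $K_{X,B}\cdot C$ is then computed through adjunction on the log smooth locus. It equals $\deg(K_{\Pbb^1} + \text{boundary}) \le -2 + \#(C\cap \text{supp }B) \cdot 1 \le 0$, using the coefficients $b_i<1$ together with the contribution of singular points. This contradicts the ampleness of $K_{X,B}$.

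The main obstacle is making this adjunction estimate rigorous on a singular $X$. As an alternative I would cite the classical finiteness result: the automorphism group of a log canonical pair of log general type is finite. This is due to Iitaka for the log smooth case, and holds in general by passing to an equivariant log resolution and using finiteness of birational automorphisms of varieties of log general type. I would use this as the cleanest route. The resulting finiteness of $Aut(X,B;L)$, combined with Lemma \ref{lemma:automorphism}, gives the one-dimensionality of the Reeb cone.
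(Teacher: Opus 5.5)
Your strategy is the paper's: reduce via Lemma~\ref{lemma:automorphism} to finiteness of $\text{Aut}(X,B)$, then invoke Iitaka's finiteness theorem. However, your setup step breaks the reduction.

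The boundary in Lemma~\ref{lemma:automorphism} is the \emph{branch divisor} of the Seifert $\Cbb^*$-bundle $Y\backslash\set{0}\to X$ from Theorem~\ref{theorem:cone-base-singularities}, not an arbitrary boundary on $X$. You take $Y=\text{Spec}\bigoplus_k H^0(X,kmK_{X,B})$ with $mK_{X,B}$ Cartier. Then $\xi_0$ acts freely on $Y\backslash\set{0}$ and the branch divisor is $0$. The lemma therefore only gives $1\to\Cbb^*\to\text{Aut}(Y,\xi_0)\to\text{Aut}(X;mK_{X,B})\to 1$, and this group contains $\text{Aut}(X,B)$ but need not be finite.

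In fact the conclusion fails for your $Y$. Take $X=\Pbb^1$ and $B=\frac34(p_1+p_2+p_3+p_4)$. Then $\deg K_{X,B}=1$ and $4K_{X,B}\sim\Ocal(4)$, so $C(\Pbb^1,\Ocal(4))=\Cbb^2/\mu_4$. This is a toric (indeed Fano) cone with two-dimensional Reeb cone, even though $\text{Aut}(\Pbb^1,B)$ is finite. The equality $\text{Aut}(X,B;L)=\text{Aut}(X,B)$ you prove is correct, but the lemma does not produce that group for your cone.

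The fix is to take for $Y$ a cone whose $\xi_0$-quotient, in the sense of Theorem~\ref{theorem:cone-base-singularities}, is $(X,B;L)$ with $B$ as branch divisor and $K_{X,B}\sim_{\Qbb}\gamma L$, $\gamma>0$. For standard coefficients, the Seifert cone $\text{Spec}\bigoplus_k H^0(X,\lfloor kK_{X,B}\rfloor)$ is an example. This is the reading the paper's proof uses (its notation $C(X,L)$ is loose here), and the one under which the corollary is applied in the proof of Theorem~\ref{theorem:main}. With it, the lemma gives exactly $\text{Aut}(X,B;L)\subset\text{Aut}(X,B)$.

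For the finiteness itself, your fallback citation is what the paper does. It uses $\text{Aut}(X,B)\subset\text{Aut}(X\backslash B)\subset\text{SBir}(X\backslash B)$, which is finite by Iitaka because the log Kodaira dimension is maximal. Your insistence on log canonicity is appropriate: it is what turns ampleness of $K_{X,B}$ into maximal log Kodaira dimension. Without it this fails: the projective cone over a smooth plane quintic has $K_X$ ample and a $\Cbb^*$-action.

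The orbit-curve argument should be discarded rather than repaired. The formula $K_{X,B}\cdot C=\deg(K_{\Pbb^1}+\text{boundary})$ treats an orbit closure as a fibre with trivial normal bundle meeting $B$ transversally. This fails in general, since orbit closures typically all pass through the same fixed points and meet invariant divisors there with multiplicity $>1$. The argument also never uses log canonicity, which the quintic example shows is indispensable.
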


\begin{proof}
By Lemma \ref{lemma:automorphism}, it is enough to show that 
\[ \text{Aut}(X,B) = \text{Aut}(X,B;K_{X,B})\] 
is finite.  This is a result due to Iitaka \cite[Theorem 11.12]{Iit82} which states that a variety $V$ with maximal logarithmic Kodaira dimension \footnote{When $X$ is the compactification of a smooth variety $V$ with smooth boundary $B$,  the logarithmic Kodaira dimension of $V$ is defined to be the Kodaira dimension of $X$ with respect to the sheaf $K_{X,B}$ \cite[Definition, p. 326]{Iit82}. Because this is a birational invariant, the definition is independent of the compactification and applies as well for singular $V$. } has finite strictly birational group $\text{SBir}(V)$ (see \cite[\S 2.12]{Iit82} for a definition); the latter contains the automorphisms group $\text{Aut}(V)$. Applying this result to $V = X \backslash B$ (whose logarithmic Kodaira dimension is maximal since $K_{X,B}$ is ample) and  using the obvious inclusion $\text{Aut}(X,B) \subset \text{Aut} (X\backslash B) \subset \text{SBir}(X \backslash B)$, we conclude that $\text{Aut}(X,B)$ is finite.
\end{proof}


\subsection{K-stability of affine cones}
In general, the K-stability notion for cones is made to include the K-stability for irregular and quasi-regular polarizations. Instead of using Riemann--Roch-type theorems \cite{RT11}, the Donaldson--Futaki invariant in this setting is defined via the \emph{index} and \emph{weight characters} \cite{CS18, MSY08}; cf. Definitions \ref{definition:index_character}, \ref{definition:weight_character}. These objects make sense for irregularly polarized affine cone $(Y,\xi)$ and give back the corresponding characters for the polarized quotient pairs $(X,B;L)$ when $\xi$ is quasi-regular. A convenience of passing to the index character is that the cyclic terms in the relevant orbifold Riemann--Roch theorems due to Toën (cf. \cite{RT11}) do not appear. 

\subsubsection{Index and weight characters}

Given a normal affine cone $(Y,T)$, there is an embedding $Y \subset \Cbb^{N}$ defined by the ideal $I \subset \Cbb[Z_1, \dots, Z_{N}]$ and a faithful representation $T \to \textnormal{GL}(N,\Cbb)$ such that the image of $T$ is a diagonal subgroup acting holomorphically and effectively on $Y$ (cf. \cite{Sum74, vC11}). Let $R = \Cbb[Z_1,\dots,Z_N]/I$ be the ring of regular functions of $Y$. Then $R$ inherits an action of $T^n$ and has a weight decomposition as a $T^n$-module:
\begin{equation} \label{eq:weight_decomposition}
R = \bigoplus_{\alpha \in \tfrak^{*}} R_{\alpha},
\end{equation}
where each $R_{\alpha}$ is isomorphic as a $\Cbb$-vector space to the ring
$$\set{f \in R \mid \Lcal_{\xi} f= \sqrt{-1} \alpha(\xi)f, \; \forall \xi \in \tfrak}.$$ With this data, the index character is defined as follows.

\begin{defn}[{\cite[equation (1.16)]{MSY08} \& \cite[Definition 4.1]{CS18}}] \label{definition:index_character}
The (\emph{equivariant}) \emph{index character---or weighted Hilbert series,} is defined for $\xi \in \mathcal{C}_0$ and $t \in \Cbb$ with $\Re(t) > 0$ (for now as a formal series) by 
\[ F(\xi, t) = \sum_{\alpha \in \tfrak^{*}} e^{-t \alpha(\xi)} \dim R_{\alpha}. \]
In other words, this is the trace of the matrix representing the action of $\xi$ on $R[Y]$.  
\end{defn}

\begin{prop}[{\cite[Lemma 4.2 \& Theorem 4.10]{CS18}, \cite{RT11}}] \label{prop:indexcharexpansion}
The formal series $F(\xi,t)$ converges if $\xi$ is a Reeb field and $\Re(t) > 0$. Moreover, $F(\xi,t)$ as a function of $t$ has a meromorphic extension to $\Cbb$ with poles along $\Re(t) = 0$ and with a Laurent expansion near $t = 0$ given by 
\begin{equation} \label{eq:indexseriesexpansion}
F(\xi,t) = \frac{a_0(\xi)n!}{t^{n+1}} + \frac{a_1(\xi)(n-1)!}{t^{n}} + O(t^{1-n}),
\end{equation}
where $a_0(\xi), a_1(\xi)$ are smooth functions on $\Ccal_R$. 
In particular, if $(X,B)$ is the orbifold quotient pair by a quasiregular Reeb vector field $\xi$, then 
\begin{equation}
a_0(\xi) = (-K_{X,B})^{n}, \quad a_1(\xi) = \frac{(-K_{X,B}).L^{n-1}}{L^n}.     
\end{equation}
For a general $\xi$, we call $a_1(\xi)$ the slope of the polarized pair $(Y,\xi)$. 
\end{prop}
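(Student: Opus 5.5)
The proof has three parts: convergence of the index character, its meromorphic continuation, and the identification of the two leading Laurent coefficients. The plan is to reduce to the quasi-regular case and then pass to general Reeb vectors by continuity.

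\textbf{Convergence.} Take the equivariant embedding $Y \subset \Cbb^N$ with $T$ diagonal, so that $R$ is a quotient of $\Cbb[Z_1,\dots,Z_N]$ by a $T$-stable ideal. Each weight space $R_\alpha$ is then a quotient of the span of monomials of weight $\alpha$, and weights occurring in $R$ lie in $\sigma$. Since $\xi$ lies in the interior of $\sigma^\vee$, we have $\alpha(\xi) \geq c|\alpha|$ on $\sigma$ for some $c>0$. Choose a generating set of homogeneous elements of $R$ with weights $w_i$, so that $\alpha(\xi)$ is positive on each $w_i$. Then
\[ \sum_\alpha e^{-t\alpha(\xi)}\dim R_\alpha \le \prod_i \bigl(1-e^{-t w_i(\xi)}\bigr)^{-1}, \]
which converges for $\Re(t)>0$.

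\textbf{Meromorphic continuation.} $R$ is a finitely generated module over a polynomial ring $\Cbb[u_1,\dots,u_{n+1}]$ generated by $T$-homogeneous elements, namely a Noether normalization compatible with the $T$-grading. A $T$-equivariant free resolution (Hilbert syzygy) gives
\[ F(\xi,t) = \frac{\sum_j \pm e^{-t\beta_j(\xi)}}{\prod_{i=1}^{n+1}\bigl(1-e^{-t w_i(\xi)}\bigr)}. \]
This is a meromorphic function of $t$ whose poles lie on $\Re(t)=0$, with a pole of order at most $n+1$ at $t=0$. The Laurent coefficients are visibly real-analytic in $\xi$ on the Reeb cone, because the $\beta_j$ and $w_i$ are fixed and only their pairing with $\xi$ varies. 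This gives smoothness of $a_0$ and $a_1$.

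\textbf{Quasi-regular identification.} For quasi-regular $\xi$, rescale so that $\xi$ generates the $\Cbb^*$-action of Theorem \ref{theorem:cone-base-singularities}. Then $R_k = H^0(X,kL)$ with the orbifold line bundle, and $F(\xi,t) = \sum_k e^{-tk} h^0(X,kL)$. Write $h^0(X,kL)$ as a quasi-polynomial in $k$, using Kawasaki/Toën orbifold Riemann--Roch (or Serre vanishing plus the Hilbert quasi-polynomial). Its top two coefficients are $L^n/n!$ and $-\tfrac12 K_{X,B}\cdot L^{n-1}/(n-1)!$ up to normalization, where $K_{X,B}$ appears because $K_Y$ pulls back from $K_X + B$ on the Seifert bundle. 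The periodic corrections only affect lower-order terms, since the twisted sectors have smaller dimension; this is the point where the index character is better behaved. Summing $\sum_k k^m e^{-tk} = m!/t^{m+1} + O(t^{-m})$ then gives the stated coefficients in \eqref{eq:indexseriesexpansion}. The coefficient $a_0$ is the volume, and $a_1$ is proportional to $-K_{X,B}\cdot L^{n-1}$; normalizing by $L^n$ yields the stated slope, matching the conventions of \cite{CS18}.

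\textbf{Main obstacle.} I expect the main obstacle to be this last step: controlling the orbifold (cyclic) contributions and the constant offsets in $\sum_k e^{-tk}(\cdots)$. One must check that they do not contaminate the $t^{-n}$ coefficient. My first attempt would be to show that the periodic parts of the quasi-polynomial have degree at most $n-1$, and that their oscillating sum contributes $O(t^{1-n})$; if that fails, I would fall back on citing \cite{RT11}.
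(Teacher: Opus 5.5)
The paper gives no argument of its own here (it only cites \cite{CS18} and \cite{RT11}). Your three-step outline is the standard route, and the convergence step is fine.

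The step that fails is the meromorphic continuation. In general $R$ has no Noether normalization by $n+1$ $T$-homogeneous elements. Take $Y=\{xy=zw\}\subset\Cbb^4$ with its full $(\Cbb^*)^3$-action: every weight space is spanned by a monomial $\chi^\alpha$, and $\sigma$ has four extremal rays. Suppose $R$ were finite over $\Cbb[\chi^{m_1},\chi^{m_2},\chi^{m_3}]$, and let $v$ lie on a ray of $\sigma$. The weight-$kv$ component of a degree-$k$ integral equation for $\chi^v$ would give $iv\in\Nbb m_1+\Nbb m_2+\Nbb m_3$ for some $i\geq 1$. Since $v$ is extremal, this forces each of the four rays to contain one of $m_1,m_2,m_3$, which is impossible. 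Consistently, for generic $\xi$ the function $F(\xi,\cdot)$ has poles on four families $2\pi i\Zbb/v_j(\xi)$ with pairwise irrational ratios, which no denominator $\prod_{i=1}^{3}(1-e^{-tw_i(\xi)})$ can produce. So your displayed formula is false in general, and the bound on the pole order at $t=0$ that you read off from it is unsupported.

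The repair is as follows. Resolve $R$ equivariantly over $\Cbb[Z_1,\dots,Z_N]$. This gives a formula of the same kind with denominator $\prod_{i=1}^{N}(1-e^{-tw_i(\xi)})$, hence the meromorphic extension and the location of the poles. Write $t^N\prod_i w_i(\xi)\,F(\xi,t)=\sum_k c_k(\xi)t^k$ with each $c_k$ polynomial in $\xi$. For quasi-regular $\xi$, the $\xi$-grading is an $\Nbb$-grading (for which a graded Noether normalization does exist), so by Hilbert--Serre the pole at $t=0$ has order $n+1$ and $c_k(\xi)=0$ for $k<N-n-1$. By density of quasi-regular vectors these $c_k$ vanish identically. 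Then $a_0,a_1$ are rational in $\xi$ with denominator $\prod_i w_i(\xi)>0$ on the Reeb cone.

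Your quasi-regular step also does not prove what it claims. Normalize $\xi$ to act with weight $k$ on $R_k=H^0(X,kL)$. Riemann--Roch together with $\sum_k k^je^{-tk}=j!\,t^{-j-1}+O(1)$ gives $a_0=L^n/n!$ and $a_1=\frac{(-K_{X,B})\cdot L^{n-1}}{2(n-1)!}$, i.e.\ $a_1/a_0=\frac{n}{2}\cdot\frac{(-K_{X,B})\cdot L^{n-1}}{L^n}$. These are not the displayed identities. Indeed $a_0>0$ always, whereas $(-K_{X,B})^n$ has sign $(-1)^n$ when $K_{X,B}$ is ample, and a Laurent coefficient cannot be ``normalized by $L^n$''. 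Already for $Y=\Cbb^2$ with $\xi=(1,2)$ (quotient $X=\Pbb^1$, $B=\frac12\,p$, $\deg L=\frac12$) one finds $F=\frac{1}{(1-e^{-t})(1-e^{-2t})}=\frac{1}{2t^2}+\frac{3}{4t}+O(1)$. So $a_0=\frac12$ and $a_1=\frac34$, while the displayed formulas give $\frac32$ and $3$. You should state what you actually obtain and observe that the displayed formulas hold only up to positive normalizing factors (for $a_0$, moreover, only when $L=-K_{X,B}$). These factors preserve the sign of $a_1$, which is what the trichotomy uses downstream.

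Finally, ``the twisted sectors have smaller dimension'' is the wrong reason exactly where it matters. The components $B_i$ of the branch divisor are $(n-1)$-dimensional twisted sectors, so their periodic corrections to $h^0(X,kL)$ are of order $k^{n-1}$, the same order as the term carrying $a_1$. In the example, $h^0(X,kL)=\lfloor k/2\rfloor+1=\frac k2+\frac34+\frac{(-1)^k}{4}$. These corrections drop out for a different reason. Each nontrivial isotropy element lies in the $\Cbb^*$ acting on the fibres, so it acts on the fibre of $L$ by a root of unity $\zeta\neq 1$. Its contribution is then $\sum_k\zeta^kk^je^{-tk}=(-\partial_t)^j(1-\zeta e^{-t})^{-1}$, which is holomorphic at $t=0$. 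Only the untwisted term $\int_X \mathrm{ch}(kL)\,\mathrm{Td}^{\mathrm{orb}}(X)$ survives, and $c_1^{\mathrm{orb}}(X)=-(K_X+B)$ is where $B$ enters.
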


\noindent The expansion \eqref{eq:indexseriesexpansion} recovers \cite[equation (1.17)]{MSY08}. 

\begin{defn} \label{definition:weight_character}
The \emph{weight characters} is defined for $\xi \in \Ccal_R$, $t \in \Cbb$ and $\eta \in \tfrak$ by 
\[ C_{\eta}(\xi,t) = \sum_{\alpha \in \tfrak^{*}} e^{-t \alpha(\xi)} \alpha(\eta). \]
This is the trace of the infinitesimal action of $\eta$ on $R_{\alpha}$. 
\end{defn}

\begin{prop}[{\cite[Theorem 4.14]{CS18}}]
The weight character $C_{\eta}(\xi,t)$ is convergent for $\Re(t) > 0$ by the same arguments as \cite[Theorem 4.10]{CS18}. Moreover, $C_{\eta}(\xi,t)$ has a meromorphic extension to $\Cbb$ with poles along $\Re(t) = 0$ and with a Laurent expansion near $t = 0$ given by 
\begin{equation} \label{eq:weightcharexpansion}
C_{\eta}(\xi,t) = \frac{b_0(\xi)}{t^{n+2}} + \frac{b_1(\xi)}{t^{n+1}} + O(t^{-n}), 
\end{equation}
with moreover 
\begin{equation}
b_0(\xi) = \frac{-1}{n+1} D_{\eta} a_0(\xi), \quad b_1(\xi) = \frac{-1}{n} D_{\eta} a_1(\xi).    
\end{equation}
\end{prop}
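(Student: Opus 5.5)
The plan is to derive the expansion \eqref{eq:weightcharexpansion} from the index character expansion \eqref{eq:indexseriesexpansion} by differentiating in the direction $\eta$. Convergence follows from the index character. Fix a Reeb field $\xi$ and $\Re(t)>0$. Since $\xi \in \Ccal_R$ pairs strictly positively with $\sigma\setminus\set{0}$, there is $c>0$ with $\alpha(\xi)\geq c\abs{\alpha}$ on the weights. Hence $\abs{\alpha(\eta)}\leq C\,\alpha(\xi)$, and the series $C_\eta(\xi,t)$ (each weight counted with multiplicity $\dim R_\alpha$) is dominated termwise by $C\sum_\alpha \alpha(\xi)e^{-\Re(t)\alpha(\xi)}\dim R_\alpha = -C\,\partial_s F(\xi,s)|_{s=\Re t}$. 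The latter is finite because $F$ converges absolutely and locally uniformly in $t$ on $\Re(t)>0$. The argument is the same one used in \cite[Theorem 4.10]{CS18}.

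The key identity is obtained by differentiating termwise in $\xi$, which is justified by local uniform convergence on compact subsets of $\Ccal_R\times\set{\Re t>0}$:
\[ D_\eta F(\xi,t) = \frac{d}{ds}\Big|_{s=0} F(\xi+s\eta,t) = -t\sum_\alpha e^{-t\alpha(\xi)}\alpha(\eta)\dim R_\alpha = -t\, C_\eta(\xi,t). \]
Thus $C_\eta(\xi,t) = -t^{-1}D_\eta F(\xi,t)$, and the meromorphic extension of $C_\eta$ to $\Cbb$ with poles on $\Re(t)=0$ comes from that of $F$, provided the extension of $F$ depends smoothly on $\xi$ and $D_\eta$ commutes with the continuation. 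For this I would use the structure from \cite{CS18}: via an equivariant degeneration or a multigraded free resolution of $R$, one can write
\[ F(\xi,t)=\frac{\sum_j c_j e^{-t\beta_j(\xi)}}{\prod_i (1-e^{-t\gamma_i(\xi)})}, \]
where the weights $\beta_j,\gamma_i$ are linear in $\xi$ and $\gamma_i(\xi)>0$ on $\Ccal_R$. This rational expression in exponentials is jointly analytic in $(\xi,t)$ away from the poles. Its Laurent coefficients at $t=0$ are therefore smooth (indeed real-analytic) in $\xi$, and $D_\eta$ can be applied coefficientwise.

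Now substitute \eqref{eq:indexseriesexpansion}, taking $O(t^{1-n})$ to mean a remainder that is holomorphic times $t^{1-n}$ and smooth in $\xi$:
\[ C_\eta(\xi,t) = -\frac{n!\,D_\eta a_0(\xi)}{t^{n+2}} - \frac{(n-1)!\,D_\eta a_1(\xi)}{t^{n+1}} + O(t^{-n}). \]
The coefficients must then be matched with the stated normalization $b_0=-\frac{1}{n+1}D_\eta a_0$ and $b_1=-\frac{1}{n}D_\eta a_1$. This matching is the main obstacle, since the factorials produced by the computation do not agree with the paper's normalization. I expect the discrepancy is a convention mismatch: in \cite{CS18} the weight character expansion is written with factorials, $C_\eta = \frac{b_0 (n+1)!}{t^{n+2}} + \frac{b_1 n!}{t^{n+1}}+\dots$. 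With that convention, $-n!\,D_\eta a_0 = (n+1)!\,b_0$ gives $b_0=-\frac{1}{n+1}D_\eta a_0$, and $-(n-1)!\,D_\eta a_1 = n!\,b_1$ gives $b_1=-\frac1n D_\eta a_1$, exactly as stated. So I would present the proof with the factorial normalization of \cite[Theorem 4.14]{CS18} and note that the displayed formulas for $b_0,b_1$ refer to it.
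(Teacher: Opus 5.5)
Your proposal is correct and is essentially the argument of \cite[Theorem 4.14]{CS18}, which the paper cites without reproducing: one writes $C_\eta(\xi,t) = -t^{-1}D_\eta F(\xi,t)$ and differentiates the Laurent expansion of the index character coefficientwise, with the smooth dependence on $\xi$ supplied by a rational expression for $F$. Your diagnosis of the normalization is also right. The paper's displayed expansion drops the factorials $(n+1)!$ and $n!$ used in \cite{CS18}, and the stated formulas $b_0=-\frac{1}{n+1}D_\eta a_0$, $b_1=-\frac1n D_\eta a_1$ hold only with those factorials; this is also the normalization under which $DF=\frac{a_1}{a_0}b_0-b_1$ reduces to the usual Donaldson--Futaki invariant in the quasi-regular case.
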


\subsubsection{Donaldson--Futaki invariant and K-semistability}
We recall here the Donaldson--Futaki invariant of an affine cone. The key result in this section is Theorem \ref{theorem:conekstab_vs_weightedkstab};i.e.,  the correspondence between the test configurations of a quasi-regularly polarized cone $(Y,T,\xi)$ and those of its quotient that are equivariant under the quotient torus $T/ \sprod{\xi}$. 

\begin{defn}[Affine cones test configurations]
An affine $T$-equivariant test configuration of $(Y,T,\xi)$ consists of a
 $(T \times \Cbb^{*})$-equivariantly flat affine morphism $\pi \colon \mathcal{Y} \to \Cbb$; where
 \begin{itemize}
\item  $T$ acts fiberwise on $\mathcal{Y}$ and trivially on $\Cbb$, while $\Cbb^{*}$ acts on $\Cbb$.
\item $\pi^{-1}(0)$ is $T \times \Cbb^{*}$-equivariantly isomorphic to $Y \times (\Cbb \backslash \set{0})$.  
\end{itemize}
The test configuration is said to be trivial if $\mathcal{Y}$ is $T$-equivariantly isomorphic to $ Y \times \Cbb$. 
\end{defn}

\begin{rmk}
Under the equivariant embedding $Y \subset \Cbb^N$, a one-parameter subgroup $\lambda : \Cbb^{*} \to GL(N,\Cbb)$ defines a test configuration of $Y$ (with $T$ acting as the left linear action and $\lambda$ on the right). Conversely, every test configuration arises in this way. 
\end{rmk}


\begin{defn}[Donaldson--Futaki invariant]
Let $a_0(\xi), a_1(\xi)$ be the coefficients in the meromorphic extension \eqref{eq:indexseriesexpansion} of the index character; let $b_0, b_1$ the coefficients in the extension \eqref{eq:weightcharexpansion} of the weight character. Let $\Ycal$ be a test configuration of $Y$ with the action of $\Cbb^{*}$ given by $\eta \in\tfrak$. 
The \emph{Donaldson--Futaki invariant} of $(\Ycal, \eta)$ is defined as 
\begin{equation}
DF(\Ycal,\xi, \eta) \coloneqq \frac{a_1(\xi)}{a_0(\xi)} b_0(\xi) - b_1(\xi).     
\end{equation}
\end{defn}

\begin{rmk}
 For a given test configuration $\Ycal$ with infinitesimal generator $\eta$, the function $\xi \in C_R \to DF(\Ycal,\xi, \eta)$ is a smooth function. This follows from smoothness of $a_i$ and $b_i$.   
\end{rmk}

\begin{defn}[K-stability]
The polarized cone $(Y,T,\xi)$ is said to be
\begin{itemize}
\item \emph{K-semistable} if $DF(\Ycal,\xi) \geq 0$ for all test configurations.
\item \emph{K-polystable} if it is K-semistable and $DF(\Ycal, \xi) = 0$ only when $\Ycal$ is product.
\item \emph{K-stable} if $DF(\Ycal, \xi) > 0$ for all non-product test configurations.
\end{itemize}
When $(Y,T,\xi)$ is K-semistable, the Reeb vector is called a \emph{K-semistable Reeb vector}. 
\end{defn}

\begin{lem} \label{lemma:uniqueness_reeb_fano}
For a Fano cone singularity, the K-semistable Reeb vector is unique.     
\end{lem}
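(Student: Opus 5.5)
We follow the volume-minimization strategy of Martelli--Sparks--Yau \cite{MSY08}, as refined by Collins--Székelyhidi \cite{CS18} and Li--Wang--Xu \cite{LWX}. Since $Y$ is a Fano cone singularity, it is $\Qbb$-Gorenstein. We therefore fix a nowhere-vanishing $T$-equivariant section $\Omega$ of $mK_Y$; this is possible because $\text{Pic}(Y)=0$, as noted in the proof of Theorem \ref{theorem:cone-base-singularities}. On the Reeb cone $C_R$ we then consider the affine-linear function $\xi\mapsto \mathcal{L}_\xi\Omega/\Omega$, which gives the charge of the canonical form. The admissible Reeb vectors form the slice
\[ \Sigma \coloneqq \set{\xi\in C_R \colon \mathcal{L}_\xi \Omega = \sqrt{-1}\,m(n+1)\,\Omega}. \]
Since $Y$ is klt, this charge is positive on $C_R$. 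Indeed, for quasi-regular $\xi$ it equals the positive multiple $-\gamma$ of $L$ with $-K_{X,B}\sim_{\Qbb}-\gamma L$, and Theorem \ref{theorem:cone-base-singularities} gives $\gamma<0$; positivity then extends to all of $C_R$ by linearity and density. Consequently every ray in $C_R$ meets $\Sigma$ exactly once, and it suffices to prove uniqueness of the K-semistable Reeb vector inside $\Sigma$.

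The first step is to show that, on $\Sigma$, K-semistability forces $\xi$ to be a critical point of the normalized volume $a_0$. We apply the Donaldson--Futaki invariant to the product test configurations generated by $\eta\in\tfrak$; both $\eta$ and $-\eta$ are admissible, so K-semistability gives $DF(Y\times\Cbb,\xi,\eta)=0$ for all $\eta\in\tfrak$. Substituting $b_0=-\frac{1}{n+1}D_\eta a_0$ and $b_1=-\frac1n D_\eta a_1$, this identity becomes
\[ \frac{a_1}{(n+1)a_0}D_\eta a_0 = \frac1n D_\eta a_1 . \]
Next we use the relation $a_1(\xi)=c(\xi)\,a_0(\xi)$ valid on the slice, where $c$ is a positive affine function coming from the canonical-form charge; this is the identity in \cite[\S 4]{CS18}, and for quasi-regular $\xi$ it is $(-K_{X,B}).L^{n-1}/L^n$ combined with $-K_{X,B}\sim -\gamma L$. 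With it, the vanishing of the Futaki invariant for all $\eta$ tangent to $\Sigma$ reduces to $D_\eta a_0(\xi)=0$. Hence $\xi$ is a critical point of $a_0|_\Sigma$. This is precisely the statement that the Futaki invariant is the derivative of the volume functional \cite{MSY08}.

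The second step is strict convexity. The volume functional $a_0(\xi)$ can be written as $\int_{\sigma} e^{-\langle\alpha,\xi\rangle}\,d\mu_{DH}(\alpha)$, an integral against the Duistermaat--Heckman measure, which is a positive measure with full-dimensional support in the moment cone $\sigma$. It follows that $a_0$ is strictly convex on $C_R$: its Hessian in direction $\eta$ is $\int\langle\alpha,\eta\rangle^2e^{-\langle\alpha,\xi\rangle}d\mu_{DH}$, which is positive for $\eta\neq0$ because $\sigma$ is full-dimensional by Proposition \ref{prop:goodaction}. Restricted to the affine slice $\Sigma$, the function $a_0$ is therefore strictly convex and has at most one critical point. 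Combined with the first step, the K-semistable Reeb vector is unique within $\Sigma$, and hence unique up to scaling in $C_R$.

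The main obstacle is the first step, and specifically the identity $a_1=c\,a_0$ for irregular $\xi$. For quasi-regular $\xi$ the identity follows from the intersection formula in Proposition \ref{prop:indexcharexpansion} together with Theorem \ref{theorem:cone-base-singularities}. To pass to irregular $\xi$, I would use that quasi-regular vectors are dense and that both $a_0$ and $a_1$ are smooth, since both sides of the identity are continuous in $\xi$. Alternatively one can simply quote \cite[Prop. 4.5]{CS18} or \cite{LWX}, where uniqueness of the minimizer of the normalized volume is established directly.
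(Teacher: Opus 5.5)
Your proposal is correct and follows essentially the same route as the paper. You normalize $\xi$ to the slice $\Sigma$ using that $a_1/a_0$ is a positive multiple of the linear charge of $\Omega$. You then use vanishing of $DF$ on product test configurations to get $D_\eta a_0(\xi)=0$ for $\eta$ tangent to $\Sigma$, and conclude by strict convexity of $a_0$. Your Duistermaat--Heckman Hessian argument justifies strictness more carefully than the paper's limit formula, which by itself only yields convexity.

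One small precision: the reduction works because $c=a_1/a_0$ is linear on $\tfrak$ and proportional to the charge, hence constant on $\Sigma$, so $D_\eta c=0$ for $\eta$ tangent to $\Sigma$. A merely affine $c$ that is non-constant on $\Sigma$ would leave the extra term $\frac1n (D_\eta c)\,a_0$ in the Futaki invariant.
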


\begin{proof}
By the $\Qbb$-Gorenstein klt assumption, $a_1/a_0$ extends to $\tfrak$ as a linear function, hence we can suppose that $\xi$ lies on a hyperplane $\Sigma$ \cite[Proposition 6.4]{CS19}. 
Under this normalization and evaluating the trivial test configuration at a vector tangent to $\Sigma$, we see that the K-semistable Reeb vector $\xi$ is the critical point of $a_0(\xi)$ on $\Sigma$, which is unique since $a_0$ is strictly convex by the formula $ a_0(\xi) = \lim_{t \to +\infty} t^{n+1} F(\xi,t)$ (cf. \eqref{eq:indexseriesexpansion}).     
\end{proof}

\begin{rmk}
We will see in the proof of Theorem \ref{theorem:main} that the K-semistable Reeb vector is actually always unique.  
\end{rmk}

The K-stability notion for a pair $(Y,\xi)$ is made to be equivalent to K-stability the orbifold quotient when $\xi$ is quasi-regular. 

\begin{thm} \label{theorem:conekstab_vs_weightedkstab}
 Let $(Y,T,\xi_0)$ be a $\Qbb$-Gorenstein normal affine cone with a quasi-regular Reeb vector and $(X,B;L)$ be the quasi-regular quotient pair of $Y$ with respect to $\xi_0$ so that $K_{X,B} = \gamma L $. Let $T_{0} \coloneqq T/ \sprod{\xi_0}^{\Cbb}$ be the torus action induced on $(X,B)$. 
 Then there is a one-to-one correspondence between 
 \begin{itemize}
     \item The test configurations of $(Y,T,\xi_0)$;
     \item The $T_{0}$-equivariant test configurations of $(X,B;L)$. 
 \end{itemize} 
 Moreover for a corresponding pair of test configurations $(\Ycal, \xi_0, \eta)$ and $(\Xcal,\Bcal, \Lcal)$, we have $DF(\Ycal,\xi_0, \eta) = DF(\Xcal, \Bcal, \Lcal)$. 
\end{thm}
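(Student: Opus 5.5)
We will prove Theorem \ref{theorem:conekstab_vs_weightedkstab} by passing through graded rings. This lets both the correspondence of test configurations and the equality of Donaldson--Futaki invariants be read off from Hilbert-type series.

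\textbf{The correspondence.} Since $\xi_0$ is quasi-regular, after rescaling it generates a $\Cbb^*$-action, and $R=\bigoplus_k R_k$ is graded by $\xi_0$-weight. Here $R_k = H^0(X,\lfloor kL\rfloor)$ in the orbifold sense of Theorem \ref{theorem:cone-base-singularities}, so that $X=\text{Proj}(R)$.

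Given a test configuration $\Ycal\to\Cbb$ of $(Y,T,\xi_0)$, the torus $T$, and in particular $\xi_0$, acts fiberwise and commutes with the $\Cbb^*$ of the test configuration. The coordinate ring $\Ccal[\Ycal]$ is therefore a flat $\Cbb[t]$-algebra graded by $\xi_0$-weight, with the compatible $T$-grading. Taking fiberwise $\text{Proj}$ over $\Cbb$ gives a flat family $\Xcal=\text{Proj}_{\Cbb}\Ccal[\Ycal]$. It comes with the orbiline bundle $\Lcal$ (the Seifert $\Cbb^*$-bundle structure of $\Ycal\setminus\{0\}\times\Cbb$) and with the branch divisor $\Bcal$ determined by the multiplicities of the $\xi_0$-orbits. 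All of this is $T_0$-equivariant, and the general fiber is $(X,B;L)$.

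Conversely, a $T_0$-equivariant test configuration $(\Xcal,\Bcal,\Lcal)$ gives $\Ycal=\text{Spec}_{\Cbb}\bigoplus_k \pi_*\Ocal(k\Lcal)$. It carries the $T$-action obtained by lifting $T_0$ through the exact sequence of Lemma \ref{lemma:automorphism}, applied in families. The two constructions are mutually inverse, as with the classical cone/Proj correspondence (cf. \cite{RT11, CS18}).

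\textbf{Main obstacle.} The step I expect to be hardest is verifying that the family $\Bcal$ of branch divisors on $\Xcal$ is well defined and flat, i.e. that the orbifold multiplicities $m_i$ are preserved along the family. I would try to deduce this from the fact that the total space is the same $\Cbb^*$-Seifert bundle, with isotropy read off from the $\xi_0$-grading, together with normality and $S_2$ of $\Ycal$. The $\Qbb$-Gorenstein hypothesis enters to ensure $K_{\Xcal,\Bcal}$ is $\Qbb$-Cartier over $\Cbb^*$, with $K_{X,B}=\gamma L$.

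\textbf{Equality of DF.} Here I would use Proposition \ref{prop:indexcharexpansion}. Specialize the index character at $\xi_0$: with $q=e^{-t}$ we get $F(\xi_0,t)=\sum_k \dim R_k\, e^{-tk}$. For the weight character we get $C_\eta(\xi_0,t)=\sum_k w_k e^{-tk}$, where $w_k$ is the total $\eta$-weight on the central fiber of $\pi_*\Ocal(k\Lcal)$. The Riemann--Roch expansions $\dim R_k=c_0k^n+c_1k^{n-1}+\dots$ and $w_k=e_0k^{n+1}+e_1k^n+\dots$ are valid for $k$ divisible by the index. Taking the Laplace transform term by term gives $a_0,a_1,b_0,b_1$ as fixed multiples of $c_0,c_1,e_0,e_1$. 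The periodic (cyclic) corrections of Toën's orbifold Riemann--Roch contribute only lower-order poles in $t$, so they can be ignored. Substituting into $DF=\frac{a_1}{a_0}b_0-b_1$ reproduces, up to the normalization fixed in the definition, the classical Donaldson--Futaki invariant $\frac{c_1}{c_0}e_0-e_1$ of $(\Xcal,\Bcal,\Lcal)$. I would do a final bookkeeping check against \cite{CS18} to confirm that the constants match exactly.
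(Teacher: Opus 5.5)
Your route is the paper's. The paper also passes to the quotient by $\Cbb^*_{\xi_0}$ in one direction, goes back via the Rees algebra $\bigoplus_{m,\lambda}t^{-\lambda}\Fcal^{\lambda}R_m$, and treats the equality of Futaki invariants as a consequence of the definition of the characters. But both places where you add detail contain a step that fails, and both concern how the boundary enters.

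First, the orbifold multiplicities are \emph{not} preserved along the family, even when $\Ycal$ is normal. (Normality or $S_2$ is not part of the definition of a test configuration anyway.) Take $Y=\Cbb^2$ with $\xi_0=(1,1)$, so $(X,B;L)=(\Pbb^1,0;\Ocal(1))$. Use the filtration $\Fcal^{\lambda}R_k=R_k$ for $2\lambda\le k$ and $\Fcal^{\lambda}R_k=0$ otherwise. The Rees algebra $\bigoplus_{2\lambda\le k}R_k t^{-\lambda}$ is a saturated semigroup ring, hence normal; it is finitely generated and flat over $\Cbb[t]$, with generic fibre $Y$. But modulo $t$ the product of any two elements of odd $\xi_0$-degree vanishes. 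So $(\Ycal_0)_{\mathrm{red}}$ is the $A_1$-cone $\operatorname{Spec}\bigoplus_j R_{2j}$, on which $\mu_2\subset\Cbb^*_{\xi_0}$ acts trivially. Thus $\Xcal=\Pbb^1\times\Cbb$ with $\Lcal=p_1^*\Ocal(1)+\tfrac12\Xcal_0$, and the Seifert fibration acquires the vertical branch divisor $\tfrac12\Xcal_0$.

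This is not harmless. The characters give $DF(\Ycal,\xi_0,\eta)=\tfrac14$ in the Collins--Sz\'ekelyhidi normalization, where products have $DF=0$. By contrast, $(\Pbb^1\times\Cbb,\,p_1^*\Ocal(1)+\tfrac12\Xcal_0)$ is a product with shifted linearization, so its intersection-theoretic $DF$ is $0$. The difference is exactly the vertical term $\ol{\Lcal}^{\,n}\cdot\tfrac12\Xcal_0$, up to the usual constant. So instead of trying to prove the multiplicities are constant, you must take $\Bcal$ to be the whole branch divisor of $\Xcal$: the closure of $B\times\Cbb^*$ plus vertical components. In other words, you have to work with orbifold test configurations in the sense of Ross--Thomas.

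Second, the Laplace transform at $t\to0$ sees the \emph{mean} over residues of $k$ of each quasi-polynomial coefficient, not the polynomial valid for $k$ divisible by the index. The cyclic terms are precisely where $B$ comes from. For $Y=\Cbb^2$ with $\xi_0$-weights $(1,2)$ one has $X=\Pbb(1,2)\cong\Pbb^1$, $B=\tfrac12[0{:}1]$ and $\dim R_k=\lfloor k/2\rfloor+1$. For even $k$ this is $k/2+1$, so your $c_1=1=\tfrac12\deg(-K_{\Pbb^1})$. However, $F(\xi_0,t)=\bigl((1-e^{-t})(1-e^{-2t})\bigr)^{-1}=\tfrac{1}{2t^2}+\tfrac{3}{4t}+\dots$ gives $a_1=\tfrac34=\tfrac12\deg(-K_{X,B})$. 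The correction $-\tfrac12$ in odd degrees has mean $-\tfrac14$ and contributes at the same order $t^{-n}$. With your bookkeeping the boundary drops out, and you would obtain the invariant of $(\Xcal,\Lcal)$ without $\Bcal$.

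The fix is to expand using the orbifold Todd class, whose first Chern class is $-K_{X,B}$, resp.\ $-(K_{\ol{\Xcal}}+\ol{\Bcal})$ with the full branch divisor above. Equivalently, average the fractional parts in Demazure's description $R_k=H^0(X,\Ocal_X(\lfloor kD\rfloor))$ for an ample $\Qbb$-divisor $D$ representing $L$. Only the remaining twisted-sector terms, of the form $\zeta^k\cdot(\text{polynomial})$ with $\zeta\neq1$, are mean-zero and holomorphic at $t=0$, so only they may be discarded. With both corrections your argument does give $DF(\Ycal,\xi_0,\eta)=DF(\Xcal,\Bcal,\Lcal)$. This reading of $\Bcal$ is also what the paper's one-line appeal to the definition implicitly needs.
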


\begin{proof}
Let $\Ycal$ be a test configuration of $(Y,T,\xi)$ with the new $\Cbb^{*}$-action; then the scheme-theoretic quotient $(\Ycal - \ol{\Cbb^{*}(0,1)}
) /\Cbb^{*}$ defines a test configuration of $(X,B;L)$ (here $0$ is the vertex of $Y$). Conversely, given a configuration test $(\Xcal, \Bcal, \Lcal)$, then taking the fiberwise cone with respect to the fiberwise polarization of $\Lcal$ yields $\Ycal$. More precisely, let $\Fcal$ be the filtration of $(X,B)$ associated to the test configuration $(\Xcal, \Bcal; \Lcal)$; then 
\[ \Ycal = \text{Spec}_{\Cbb[t]} \bigoplus_{m \in \Zbb} \bigoplus_{\lambda \in \Zbb} t^{-\lambda} \Fcal^{\lambda}R_m. \]
The equality between Futaki invariants follows from the definition of $DF(\Ycal, \xi_0, \eta)$, which equals the Futaki invariant of its quasi-regular quotient $(\Xcal, \Bcal; \Lcal)$. 
\end{proof}

\begin{rmk}
From the construction, we have that 
\[ DF(\Xcal, \Bcal, \Lcal) = DF(\Xcal_{\xi_0}, \Bcal_{\xi_0}, \Lcal).\] 
Namely, varying the Reeb vector in a given test configuration of $Y$---with total space $(\Ycal,\eta)$, amounts to varying the orbifold structures of $(\Xcal, \Bcal)$ in the test configurations of $(X,B;L)$ while \emph{fixing the total space} of $\Lcal$. This remark will be implicitly use in Section \ref{section:proof_main}.  
\end{rmk}

\section{Proof of the Main Theorems} \label{section:proof_main}

\subsection{Proof of Theorem \ref{theorem:main}}
In this section, the cone $Y$ is assumed to be normal $\Qbb$-Gorenstein. A normal polarized pair $(X,B; L)$ consists of  a normal projective variety $X$, a $\Qbb$-divisor $B = \sum a_i B_i$ with $0 \leq a_i \leq 1$, and an ample line bundle $L$ over $X$. By a $T_0$-action on the polarized pair, we mean an effective regular action of an algebraic torus $T_0$ on $X$ preserving $B$ and linearizing $L$. 
Recall that

\begin{defn}[{\cite{OX12}}]
A \emph{log canonical model} of the pair $(X,B)$ is a birational projective morphism $f \colon Z \to (X,B)$ such that, denoting by $E_{red}$ the sum of $f$-exceptional prime divisors with coefficient $1$,
\begin{itemize}
    \item $(Z,E_{red})$ is log canonical. 
    \item $K_Z + E_{red} + B'(\coloneqq f_{*}^{-1}B)$ is ample over $X$.
\end{itemize}
\end{defn}

The existence (and uniqueness) of such model is proved by Odaka--Xu \cite{OX12}; and has since been used in a crucial way to construct destabilizing test configurations \cite{Oda13, BHJ17}. To use the model in the cone setting, we first need the following lemma. 

\begin{lem} \label{lemma:equivariant_lc_model}
Let $(X,B; L)$ be a normal polarized pair with a torus action $T_0$ such that $K_{X,B} = \gamma L$ is $\Qbb$-Cartier. Then there is a unique $T_0$-equivariant log canonical model $f \colon Z \to (X,B)$. Moreover, $\text{Ex}(f) \subset Z$ is of pure codimension $1$ and $a_i < -1$ for every component of the exceptional divisor $E_i$.
\end{lem}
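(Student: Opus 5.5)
The plan is to take existence and uniqueness of the (non-equivariant) log canonical model from Odaka--Xu \cite{OX12}, and then obtain $T_0$-equivariance from uniqueness. First I would recall how the model is constructed. Take a log resolution $g\colon W \to X$ of $(X,B)$ that is $T_0$-equivariant; such resolutions exist by equivariant resolution of singularities for algebraic group actions. Set $\Delta_W \coloneqq g_*^{-1}B + E_W$, where $E_W$ is the reduced $g$-exceptional divisor. Then run the relative log MMP over $X$ for $(W,\Delta_W)$ and take the relative log canonical model. This is $Z = \text{Proj}_X \bigoplus_m g_*\Ocal_W(m(K_W+\Delta_W))$. Finite generation of this algebra holds in the setting of \cite{OX12}: they reduce to the dlt case and run an MMP with scaling over $X$.

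For equivariance, note that $T_0$ acts on $W$ preserving $\Delta_W$. Hence it acts on each sheaf $g_*\Ocal_W(m(K_W+\Delta_W))$ compatibly with $g$, so it acts on the relative Proj, and $f\colon Z\to X$ is $T_0$-equivariant. An alternative route uses uniqueness: for every $t\in T_0$, the composition $t\circ f\colon Z \to X$ is again a log canonical model of $(X,B)$, because $t$ preserves $B$. So there is a unique isomorphism $\phi_t\colon Z\to Z$ over $t$. Uniqueness gives the cocycle identity $\phi_{st}=\phi_s\phi_t$. Algebraicity of $t\mapsto\phi_t$ follows from the Proj description over the family $T_0\times X$. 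Uniqueness of the equivariant model is then immediate from uniqueness in \cite{OX12}.

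For the structural statements I would argue as follows. Since $K_{X,B}=\gamma L$ is $\Qbb$-Cartier, $f^*K_{X,B}$ makes sense, and we can write
\[ K_Z + B' + E_{red} \sim_{\Qbb} f^*K_{X,B} + \sum_i (1+a_i)E_i, \qquad a_i = a(E_i,X,B). \]
The divisor $-\sum_i(1+a_i)E_i$ is exceptional, and $\sum_i (1+a_i)E_i$ is $f$-ample. By the negativity lemma, $-\sum_i(1+a_i)E_i$ is effective, so $1+a_i\le 0$. Strictness should come from minimality of the model. If some $E_i$ had $1+a_i=0$, then the $f$-ample divisor $\sum_j(1+a_j)E_j$ would be anti-effective with $E_i$ not in its support. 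I would then compare with the model obtained by not extracting $E_i$, using the uniqueness of the canonical ring description; this forces $E_i$ to be contracted, which is a contradiction.

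Pure codimension one of $\text{Ex}(f)$ follows from the same divisor. It is $f$-ample and supported on the exceptional divisors, so every curve contracted by $f$ meets it negatively and hence lies in $\bigcup E_i$. Therefore $\text{Ex}(f)=\text{Supp}\sum E_i$. The main obstacle is the strict inequality $a_i<-1$ rather than $a_i \le -1$. I would handle it by the standard argument in \cite{OX12}: divisors with $a_i\ge -1$ have $(1+a_i)\ge0$ and are contracted by the relative log canonical model, since they lie in the relative stable base locus only when their coefficient is negative.
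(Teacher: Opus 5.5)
Your existence and uniqueness argument and the bound $a_i\le -1$ are fine. Your route to equivariance is also valid and more economical than the paper's: the $T_0$-action on $(W,\Delta_W)$ induces one on $\bigoplus_m g_*\Ocal_W(m(K_W+\Delta_W))$, hence on its relative $\text{Proj}$. The paper instead runs the MMP $T_0$-equivariantly via Prokhorov's equivariant cone, contraction and flip theorems.

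The gap is the strict inequality $a_i<-1$, which you flag as the main obstacle but do not prove. The ``model obtained by not extracting $E_i$'' is never constructed, so that comparison is not an argument. The stable-base-locus argument fails exactly in the boundary case. Write $K_W+\Delta_W\sim_{\Qbb,X}\sum_E(1+a_E)E$. Divisors with $a_E>-1$ are indeed relative base components, but divisors with $a_E=-1$ in general are not. For instance, over the lc locus of $(X,B)$ and for $m$ divisible, the degree-$m$ part of the algebra is locally generated by one section with divisor $m\sum_E(1+a_E)E$, which does not contain them. They are contracted by the ample-model morphism, not as base components. So nothing you wrote excludes an exceptional divisor $E_i$ on $Z$ with $a_i=-1$.

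The missing ingredient is the support statement of the negativity lemma \cite[(3.39)]{KM98}, which is what the paper invokes. It is already latent in your codimension-one argument once you track supports. Let $D\coloneqq\sum_j(1+a_j)E_j$; it is $\Qbb$-Cartier and $f$-ample, and $-D\ge 0$. If $C$ is an $f$-contracted curve with $C\not\subset\text{Supp}(-D)$, then $(-D)\cdot C\ge 0$, contradicting $D\cdot C>0$. Hence every $f$-contracted curve lies in $\text{Supp}(-D)=\bigcup_{a_j<-1}E_j$, not merely in $\bigcup_jE_j$.

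As $X$ is normal, Zariski's main theorem shows that every point of $\text{Ex}(f)$ lies on such a curve, so $\text{Ex}(f)\subset\bigcup_{a_j<-1}E_j$. Each prime divisor $E_i$ lies in $\text{Ex}(f)$, so it is one of these components, i.e. $a_i<-1$. The same inclusion gives $\text{Ex}(f)=\bigcup_iE_i$, which is of pure codimension one. The argument must use the $\Qbb$-Cartier divisor $-D$ as a whole: the individual $E_j$ need not be $\Qbb$-Cartier on $Z$, so you cannot intersect $C$ with them one at a time.
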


\begin{proof}
We briefly explain the proof of \cite{OX12} with ingredients from the $G$-equivariant MMP (see \cite[Example 2.18]{KM98} for a discussion, and \cite{Pro21} for a more detailed treatment). The main steps consist of running the MMP from a log resolution of $\wt{Y} \to (X,B)$. Namely, 
\begin{itemize}
    \item construct a dlt modification $(Z,\Delta_Z)$ of $(X,B)$ (cf. \cite[(2.44)]{KM98} for a definition of dlt due to Sz\'{a}bo). $(Z,\Delta_Z)$ can be built by running a sequence of $(K_{\wt{Y}} + \Delta_{\wt{Y}})$-MMP over $(X,\Delta)$ \cite[Section 4.1]{Fuj11};
    \item show that there is a good minimal model $(Z',\Delta_{Z'})$ of $(Z,\Delta_Z)$ over $X$ (here good minimal model means that $Z'$ is a minimal model such that a multiple of the relative canonical ring associated to $K_{Z'} + \Delta_{Z'}$ is finitely generated). This can again be done by a sequence of MMP \cite[Lemma 2.6]{OX12};
    \item take $m_0$ large enough so that $m_0(K_{Z'} + \Delta_{Z'})$ is Cartier, then the relative log canonical model of $Z'$ over $X$ 
    \[ Y' \coloneqq \text{Proj} R(Z'/X, K_{Z'} +\Delta_{Z'}),\] 
    turns out to be the log canonical model of $(X,B)$ \cite[Lemma 2.2]{OX12}. 
\end{itemize}

Now since $(X,B)$ admits an algebraic group action $G$, we can take a $G$-equivariant log resolution $\wt{Y}$ of $(X,B)$ (e.g., functorial resolution \cite[(9.1)]{Kol07}). Then it only remains to show that the birational modifications in the MMP runs $G$-equivariantly starting from a log canonical pair. Namely, assuming first that $G$ is finite, one can take the following steps :
\begin{itemize}
    \item establish a $G$-invariant version of the (relative) Cone Theorem; that is, for the classes of $G$-invariant numerically effective curves $NE(.)^G$ we have 
    \[ \ol{NE}(.)^G = \ol{NE}(.)^G_{K_{(.)} \geq 0} + \sum \Rbb^{+} [C_i],\]
    where $[C_i]$ are countably many extremal rays in $\ol{NE}(.)^G$. This is asserted in \cite[Theorem 3.3.1]{Pro21};
    \item show that for each curve in the negative part of $K_{(.)}$, $G$-equivariant contraction and flip exist at each step provided that the original pair is lc; see \cite[Theorem 3.4.3]{Pro21} for a proof;
    \item show that if the sequence of $G$-equivariant flips and contractions is finite for $G = \set{1}$, then it is finite for a general $G$ \cite[Proposition 4.4.2]{Pro21}. 
\end{itemize}
 At each step (e.g, taking a flip), the group $G$ might act only rationally; however it is well-known that one can regularize the action; cf. \cite[Theorems 15.2.1, 15.2.2]{Pro21} and references therein.
 
 Reduction from the general case to the finite case follows from the arguments in \cite[525]{Pro21}, which show that the connected identity component of an algebraic group  $G$ acts trivially on the Mori cone $\ol{NE}(.)$.

 Finally for such a model $f \colon Z \to X$ we have 
 \[ K_{Z} + E_{\text{red}} + B' \sim_{\Qbb} f^* (K_X +B) + \sum_i (a_i +1) E_i, \; E_{\text{red}} \coloneqq \sum_i E_i. \]
By definition $\sum_{i} (a_i + 1) E_i$ is $f$-ample, hence $a_i < -1$ by the Negativity Lemma; cf. \cite[(3.39)]{KM98} or \cite[Lemma 1.13 (i)]{BHJ17}.
\end{proof}

We then obtain the following result. Although we will only use the conclusion in the case where $(X,B)$ is a log Fano pair, we still record the full statement here for the sake of generality. 

\begin{prop} \label{proposition:nonKss_test_configuration}
Let $(X,B; L)$ be a normal polarized pair with a $T_0$-action. 
If $(X,B)$ is not log canonical, then there is a non-K-semistable $T_0$-equivariant test configuration of $(X,B)$. Namely, there is a sequence of $T_0$-equivariant test configurations $(\Xcal, \Bcal, \Lcal'_r)$ such that 
\[ DF(\Xcal, \Bcal, \Lcal'_r) \to -\infty, \; r \to +\infty. \]  
The same conclusion holds when $(X,B)$ is a log Fano pair which is not klt. 
\end{prop}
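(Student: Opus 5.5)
Following Odaka's strategy in \cite{Oda13}, we build the destabilizing test configurations as deformations to the normal cone of an ideal coming from the equivariant log canonical model of Lemma \ref{lemma:equivariant_lc_model}, and then compute their Donaldson--Futaki invariants via discrepancies.

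\textbf{Step 1: the ideal.} Assume $(X,B)$ is not log canonical. Lemma \ref{lemma:equivariant_lc_model} gives the $T_0$-equivariant log canonical model $f\colon Z\to X$. Its exceptional locus is of pure codimension one, with components $E_i$ whose discrepancies satisfy $a_i<-1$, and $-\sum_i(a_i+1)E_i$ is $f$-anti-ample up to sign; that is, $\sum_i(a_i+1)E_i$ is $f$-ample. By uniqueness and equivariance, $Z$ is the blow-up of $X$ along a $T_0$-invariant coherent ideal $\Ical\subset\Ocal_X$ with $f^{-1}\Ical\cdot\Ocal_Z=\Ocal_Z(-\sum c_iE_i)$, where $c_i$ is proportional to $-(a_i+1)>0$ and the ideal is chosen so that $-\sum c_iE_i$ is $f$-ample. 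Such an ideal is obtained by pushing forward a sufficiently divisible multiple of this divisor; it is $T_0$-invariant because $T_0$ permutes, and in fact preserves, the $E_i$, being connected.

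\textbf{Step 2: the test configurations.} Let $\Xcal$ be the normalized blow-up of $X\times\Cbb$ along the $T_0\times\Cbb^*$-invariant flag ideal $\Ical+(t)$, or more generally along $\Ical^{\,s}+(t^{s})$ to allow rescaling. Set $\Lcal'_r=r\,p^*L-\Ecal$, where $\Ecal$ is the exceptional divisor. For $r\gg0$ this is relatively ample, and $(\Xcal,\Bcal,\Lcal'_r)$ is a $T_0$-equivariant test configuration, with $\Bcal$ the strict transform of $B\times\Cbb$. By Theorem \ref{theorem:conekstab_vs_weightedkstab}, these correspond to test configurations of the cone, although the proposition itself only concerns $(X,B)$.

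\textbf{Step 3: the key computation.} We use the intersection-theoretic formula for the DF invariant on the compactification over $\Pbb^1$, splitting it into a log discrepancy term and an energy term:
\[ DF=\frac{1}{L^n}\Big( (K_{\ol\Xcal/\Pbb^1}+\Bcal)\cdot\Lcal'^{\,n}_r+\frac{n}{n+1}\mu\,\Lcal'^{\,n+1}_r\Big)\quad(\text{suitably normalized}). \]
Expanding in $1/r$ as in \cite{Oda13}, the energy term and the contribution of $p^*(K_X+B)$ are of lower order. The leading term is
\[ r^{\,?}\sum_i (a_i+1)\,c_i^{\,n}\,(\text{positive local intersection number}), \]
which is negative because $a_i+1<0$ and $\Ecal$ is relatively anti-ample, so the local self-intersection $(-\Ecal)^n\cdot\Ecal$ has a definite sign. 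After normalization by the norm of the test configuration, or by choosing the exponent $s$ appropriately, this leading term tends to $-\infty$. The main obstacle is exactly this asymptotic: checking that the discrepancy term dominates the $\mu\,\Lcal'^{\,n+1}$ term. We follow Odaka's Proposition 4.3 computation; the boundary $B$ only modifies the canonical divisor, and equivariance plays no role in the intersection numbers.

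\textbf{Step 4: the log Fano case.} Now assume $-K_{X,B}$ is ample and $(X,B)$ is log canonical but not klt. Replace the log canonical model by a $T_0$-equivariant dlt modification, or the Odaka--Xu construction in \cite[Theorem 1.3]{OX12}, extracting the divisors $E_i$ with $a_i=-1$. Take $L=-K_{X,B}$ up to scaling and $\Lcal=-K_{\Xcal,\Bcal}$-type polarization $p^*L-\epsilon\Ecal$. The discrepancy term vanishes, and the sign is then decided by the $\mu$ term together with the $(K+\Bcal)$-part, which together give $-\epsilon^{n+1}\cdot(\text{positive})$ to leading order as in \cite[Theorem 1.3]{Oda13}. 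This yields negative DF, and rescaling gives $-\infty$.
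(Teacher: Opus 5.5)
You follow the paper's route (equivariant lc model, flag ideal, deformation to the normal cone, Odaka's $r\to\infty$ asymptotics). However, Steps 2--3 have a genuine gap: the sign of the leading coefficient is not governed by $a_i+1$.

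The leading term is a positive multiple of $r^{n+d}\,(p^*L)^d\cdot(-\mathcal E)^{n-d}\cdot K_{\ol{\Xcal}/X\times\Pbb^1}$, where $d$ is the dimension of the centre. So what matters are the discrepancies of the exceptional divisors $E_i'$ of $\Xcal\to X\times\Cbb$ \emph{over $X\times\Cbb$}, and these also depend on $\mathrm{ord}_{E_i'}(t)$. For your flag ideal $\Ical+(t)$, near a general point of $E_i\times\set{0}$ on $Z\times\Cbb$ the ideal is $(x^{c_i},t)$ with $c_i=\mathrm{ord}_{E_i}\Ical$. Hence $E_i'$ is the monomial valuation $\mathrm{ord}_{E_i}+c_i\,\mathrm{ord}_t$, and
\[ a(E_i',X\times\Cbb)=(a_i+1)+c_i-1=a_i+c_i . \]
Since $\Ical$ comes from a sufficiently divisible multiple $e$ of $\sum_i(a_i+1)E_i$, you have $c_i=e\abs{a_i+1}$, so $a_i+c_i=(e-1)\abs{a_i+1}-1>0$ for $e$ large. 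Then $K_{\ol\Xcal/X\times\Pbb^1}$ is effective, the leading coefficient is positive, and $DF\to+\infty$.

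Your sign is right only for the log (non-Archimedean entropy) part. $DF$ also contains the nonnegative term coming from $\mathcal X_0-\mathcal X_{0,\mathrm{red}}$ (multiplicity $c_i$ along $E_i'$), which is of the same order in $r$. Passing to $\Ical^s+(t^s)$ changes nothing: its integral closure equals that of $(\Ical+(t))^s$, so the normalized blow-up is the same.

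The missing ingredient is the asymmetric rescaling used by Odaka and in the paper. Blow up $\Ical+(t^m)$ with $m\gg0$, equivalently base change $t\mapsto t^m$ and normalize. Then $E_i'$ corresponds to $\frac{m}{g}\mathrm{ord}_{E_i}+\frac{c_i}{g}\mathrm{ord}_t$ with $g=\gcd(m,c_i)$, and
\[ a(E_i',X\times\Cbb)=\frac{m(a_i+1)+c_i}{g}-1\le -1 \quad\text{for } m\ge e. \]
So $-K_{\ol\Xcal/X\times\Pbb^1}$ is effective, and Odaka's positivity lemma gives the negative sign.

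The same issue affects Step 4. There the discrepancy term vanishes only if $\mathrm{ord}_{E_i'}(t)=1$, i.e.\ the central fibre is reduced along the $E_i'$, which requires $m$ divisible by the $c_i$. Otherwise the discrepancies $c_i-1\geq 0$ enter at order $\epsilon^{n-d}$, dominate the energy term of order $\epsilon^{n+1-d}$, and can make $DF$ positive.

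Two smaller points. The energy term is of lower order because the fibre polarization is $rL$, so the prefactor $(rL)^n$ carries one more power of $r$ than $(rL)^{n-1}\cdot K_X$; your write-up should say this rather than defer it. Also, for $d>0$ your ``$(-\mathcal E)^n\cdot\mathcal E$'' should read $(p^*L)^d\cdot(-\mathcal E)^{n-d}\cdot E_i'$.
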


\begin{proof}
The proof follows the same strategy as in \cite{Oda13} and crucially uses existence of the equivariant log canonical model in Lemma \ref{lemma:equivariant_lc_model}. We reproduce the arguments here for the reader's convenience. The gist of the proof is a construction of an ideal sheaf $\Ical$ on $X \times \Cbb$, satisfying the following properties
\begin{enumerate}
\item \label{property_support-invariant} $\Ical$ is  $T_0 \times\Cbb^{*}$-invariant (where $\Cbb^{*}$ acts on $\Cbb$) and supported on the subscheme $X \times \set{0}$ (this is called a flag ideal sheaf in \cite{Oda13}). 
\item \label{property_destabilizing} $\Ical$ is destabilizing; i.e., the deformation to the normal cone of $\Ical$  yields a non-K-semistable test configuration, provided $(X,B)$ is not log canonical. 
\end{enumerate}
 To simplify the proof, we work with the pair $(X,0)$ instead of the full pair $(X,B)$ since no substantial difficulties arise (see \cite[Theorem 3.7]{OS15} for the intersection formula of the Donaldson--Futaki invariant of a pair). 
 
\noindent \emph{Step \eqref{property_support-invariant}: Construction of the ideal sheaf $\Ical$ and corresponding test configuration.}

The ideal sheaf $\Ical$ is obtained in an ad-hoc manner using the $T_0$-equivariant log canonical model $Z$ built in Lemma \ref{lemma:equivariant_lc_model}.
First of all, since $Z \to X$ is a birational projective morphism we can take
\[I := \pi_{*}\Ocal_Z(e(K_{Z} +E_{red})), \]
for a suitable $e$ so that $Bl_I X \simeq Z$ (cf. \cite[Chapter II]{Har77}). The exceptional divisors $E_i$ of this blow-up satisfies 
\begin{equation}
 a(E_i, X) < -1,
\end{equation}
by Proposition \ref{lemma:equivariant_lc_model}. 
The ideal sheaf $\Ical$ defined as
\begin{equation}
 \Ical := \ol{(I+(t^m))^N},
\end{equation}
 clearly satisfies property \eqref{property_support-invariant}. The total space $\Xcal := Bl_{\Ical} (X\times \Cbb)$ is then a normal variety and inherits a $T_0 \times \Cbb^{*}$-action. Geometrically, $\Xcal$ is obtained via a base change $t \to t^m$ and the normalization of the deformation to the normal cone $Bl_{I +(t)} (X \times \Cbb)$. We thus obtain a $T_0$-equivariant degeneration via the composed map 
\begin{equation*}
 \Xcal \to X \times \Cbb \to \Cbb.    
\end{equation*}
Set $\Lcal \coloneqq p_1^{*}L$ where $p_1$ is the projection $X \times \Cbb \to X$; denote by $E' \coloneqq \sum a_i(E_i', X \times \Cbb) E_i'$
the exceptional divisor of the blow-up $\Xcal \to X \times \Cbb$. Then for some integer $r$ large enough, the line bundle 
\[ \Lcal'_{r} \coloneqq r\Pi^{*}\Lcal - E'\] 
is relatively ample on $\Xcal \to \Cbb$ and (possbily after taking a multiple) $T_0 \times \Cbb$-linearized. Moreover, $\Lcal'_r$ restricts to $L$ on every non-central fiber; hence, we obtain a $T_0$-equivariant test configuration $(\Xcal, \Lcal'_r)$ for some large $r$.

To make sense of intersection numbers, we will need to consider the completion $(\ol{\Xcal}, \ol{\Lcal})$ of $(\Xcal, \Lcal)$ by adding the divisor $\infty$ from $ X \times \Pbb^1$. This is but the blowup 
\begin{equation} \label{equation:projective_testconfiguration}
\Pi \colon \ol{\Xcal} \to X \times \Pbb^1
\end{equation}
along $\Ical \times \set{0}$; the exceptional divisor of which will still be denoted by $E'$. We also denote by 
\[ \ol{\Lcal'_r} \coloneqq r \Pi^{*} \ol{\Lcal} - E'. \]   \\
\noindent \emph{Step \eqref{property_destabilizing}: Destabilizing property of the ideal sheaf $\Ical$.}

This is achieved by computing the asymptotic of the Donaldson--Futaki invariant $DF(\Xcal, \Lcal'_{r})$ as $r \to +\infty$ via the following intersection-theoretic formula due to Odaka and Wang \cite{Oda13b, Wan12}:
\begin{equation} \label{equation:futaki_intersection}
\begin{aligned}
DF(\Xcal, \Lcal') &= -n(L^{n-1}.K_X)(\ol{\Lcal'})^{n+1} \\
&+ (n+1)L^n \tuple{(\ol{\Lcal'})^n. \Pi^*p_1^{*}K_X} \\
&+ (n+1)L^n \tuple{(\ol{\Lcal'})^n.K_{\ol{\Xcal}/ X \times \Pbb^1}}.
\end{aligned}
\end{equation}
More precisely, as $r \to +\infty$, $DF(\Xcal, \Lcal'_{r})$ is a polynomial in $r$ with leading coefficient \footnote{The coefficient $S_{(X,L)}(\Ical)$ is called the $S$-coefficient in \cite{Sha81} and \cite[Proposition 3.4]{Oda13} (see also \cite{BHJ17}, where this is baptized the non-Archimedean entropy).}
\begin{equation} \label{equation:asymptotic_futaki}
c_{n,d} S_{(X,L)}(\Ical) r^{n+d } + O(r^{n+d-1}); \quad S_{(X,L)}(\Ical) \coloneqq (\ol{\Lcal}^d(\ol{\Lcal'})^{n-d}. K_{\ol{\Xcal}/ X \times \Pbb^1}),
\end{equation}
where $d = n-1$ is the dimension of $\text{Supp}(\Ocal_{X \times \Cbb} / \Ical)$ (by the pure-dimensional assertion in Lemma \ref{lemma:equivariant_lc_model}), and $c_{n,d} > 0$. To conclude, it is sufficient to show that 
\begin{equation*} 
S_{(X,L)}(\Ical) < 0, 
\end{equation*}
provided $(X,B)$ is not log canonical. This follows from a positivity property of intersection numbers (cf. \cite[Lemma 3.5]{Oda13}); namely, 
\begin{equation} \label{equation:positivity_intersection_number}
S_{(X,L)}(\Ical) = \Pi^{*}D^d. (D')^{n-d}.(-E) < 0,
\end{equation}
where
\begin{itemize}
\item $D$ is any ample Cartier divisor $H \times \Pbb^1$ with $H \in \abs{L}$,
\item  $D'$ any ample Cartier divisor in $\abs{r \ol{\Lcal'}}$, 
\item $E$ is the effective divisor $-K_{\ol{\Xcal} / (X \times \Pbb^1)}$. Here effectiveness follows from a computation in local coordinates and non-log-canonicity of $(X,B)$:
\begin{equation} 
a(E_i', X \times \Cbb) = c_i(a_i(E_i,X) + 1) < 0;
\end{equation}
cf. \cite{Oda13}.  
\end{itemize}
Finally using \eqref{equation:asymptotic_futaki}, \eqref{equation:positivity_intersection_number}, we conclude that $DF(\Xcal, \Lcal'_r) \to -\infty$ for $r \to +\infty$, i.e. $(X,B;L)$ is not K-semistable if $(X,B)$ is not log canonical. 

To conclude the log Fano case ($L = -K_{X,B}$), assume that $(X,B)$ is K-semistable log canonical but not klt. Let $\pi \colon \wt{X} \to X$ be a $T_0$-equivariant log resolution of $X$ with reduced exceptional divisor $E_{\text{red}} = \sum E_i$. Then $(X', (1-\varepsilon)E + B')$ is a klt pair for $\varepsilon$ small enough.  Take $m_0$ large enough so that $D_0 = m_0\sum (A_i - \varepsilon)E_i$ is a Cartier divisor, and set
\[ (R(X'/X, D_0)) = \bigoplus_m \pi_{*}\Ocal_{X'}(D_0). \]
Then the latter is finitely generated \cite[Corollary 1.1.2]{BCHM10}. Moreover, $\text{Proj} R(X'/X,D_0)$ defines a
 $T$-equivariant log canonical model $Z \to X$ which is actually klt \cite[Theorem 1.2]{BCHM10}, hence a projective morphism which is not isomorphism. Moreover, $Z \to X $ is the blow-up of some $T_0$-invariant ideal sheaf; and we can construct the test configuration by  \eqref{equation:projective_testconfiguration}. In this case $K_{\ol{\Xcal}/ (X \times \Pbb^1)}$ is trivial since $(X,B)$ is non-klt. Moreover, the part 
\[ -n(-K_X)^n (r\Pi^*(\ol{-\Kcal_{\Xcal}}) - E')^{n+1}  \]
in the intersection formula \eqref{equation:futaki_intersection} goes to $-\infty$ as $r \to +\infty$; hence $(X,B)$ is not K-semistable. 
\end{proof}

We now conclude the proof of Theorem \ref{theorem:main}.\\ 
\noindent \emph{K-semistability implies log canonical.} \\
 Fix a K-semistable polarization $(Y,T,\xi)$ and assume that  $Y \backslash \set{0}$ is not log canonical; we will show that $(Y,T,\xi)$ has negative Donaldson--Futaki invariant for some test configuration. 
 
 We consider a GIT quotient pair $(X,B)$ of $Y$ by a quasi-regular Reeb vector $\xi_0 \in \tfrak $. Since $Y$ is $\Qbb$-Gorenstein, there is an ample line bundle $L \to X$ such that $K_{X,B}= \gamma L$ is $\Qbb$-Cartier. 
By assumption $(X,B)$ is  non-log-canonical pair with a $T_0$ action, so there is a $T_{0}$-equivariant configuration test $(\Xcal, \Bcal, \Lcal'_r)$ with $DF(\Xcal, \Bcal, \Lcal'_r) < -1$ by Proposition \ref{proposition:nonKss_test_configuration}. 
Hence, by Theorem \ref{theorem:conekstab_vs_weightedkstab}, $(\Xcal, \Bcal, \Lcal'_r)$ corresponds to a $T$-equivariant configuration test $(\Ycal, \eta)$ of $Y$ with 
 \[ DF(\Ycal,\xi_0, \eta) = DF(\Xcal, \Bcal, \Lcal'_r) < -1. \]
 By choosing $\xi_0$ quasi-regular so that $\mathfrak{t}$ contains both $\xi, \xi_0$, it follows by continuity of $\xi \to DF(\Ycal, \xi,\eta)$ that $DF(\Ycal, \xi, \eta) \leq -1 < 0$; i.e. $(Y,\xi)$ is not K-semistable. \\

 \emph{The case $a_1(\xi) < 0$.}  We will prove that the Reeb cone of $Y$ is in fact one-dimensional.
 By continuity of $a_1$ there is a  quasi-regular Reeb vector $\xi_0$ such that $a_1(\xi_0)  <0$. Then by Theorem \ref{theorem:cone-base-singularities} the $\xi_0$-quasi-regular quotient $(X,B)$ has ample log canonical bundle $K_{X,B}$, hence the Reeb cone of $Y$ is one-dimensional (cf. Corollary \ref{lemma:iitaka_finite_group}). In particular, 
\begin{align*}
Y  \; \text{K-stable} &\iff (X,B) \; \text{K-stable}\\
&\iff  (X,B) \; \text{log canonical} \; 
\text{\cite[Theorem 4.1]{OS15}} \\
&\iff (X,B) \; \text{K-semistable} \\
&\iff Y\backslash\set{0} \; \text{log canonical},
\end{align*}
which allows us to conclude.
 \\

 \emph{The case $a_1(\xi) > 0$.} 
Assume that $(Y,T,\xi)$ is K-semistable but $Y \backslash\set{0}$ is not klt; then any quasi-regular quotient $(X,B)$ is not klt. Since $a_1(\xi) > 0$, we can choose $\xi_0$ close enough to $\xi$ such that $a_1(\xi_0) > 0$ (by continuity of $a_1$), i.e., the corresponding quotient $(X,B)$ is a log Fano pair. Then non-klt implies by Proposition \ref{proposition:nonKss_test_configuration} that there is a non-trivial $T_0$-equivariant test configuration of $(X,B)$, which lifts to a non-trivial test configuration $(\Ycal, \eta)$ such that \[ DF(\Ycal,\xi_0,\eta) \leq -1. \]
Letting  $\xi_0$ go to the K-semistable $\xi$, we have 
\[DF(\Ycal,\xi,\eta) \leq -1< 0,\] 
that is, $Y$ is not K-semistable, a contradiction. Thus, $Y \backslash \set{0}$ must be klt. 

Finally, since any quasi-regular quotient $(X,B)$ by $\xi_0$ close enough to $\xi$ is klt with $K_{X,B} = \gamma L$ and $\gamma = -a_1(\xi_0) <0$, it follows from Theorem \ref{theorem:cone-base-singularities} that $Y$ is itself klt. \\

 \emph{The case $a_1(\xi) = 0$.} 
Assume $(Y,T,\xi)$ is K-stable with $Y \backslash \set{0}$ non-klt. We first show that in this case $\xi$ must be quasi-regular. If in a neighborhood of $\xi$ we have $a_1(\xi_0) > 0$ (for some $\xi_0 \neq \xi$), then for every such quasi-regular $\xi_0$, the quotient $(X,B)$ is log Fano without being klt, hence $(X,B)$ is not K-semistable. The same argument as above shows that $(Y,T,\xi)$ is not K-stable, a contradiction. It follows that $a_1(\xi_0) \leq 0$ in a neighborhood of $\xi$, but if $a_1(\xi_0) < 0$ for some quasi-regular $\xi_0$ the Reeb cone of $Y$ is one-dimensional cf. Lemma \ref{lemma:iitaka_finite_group}, so we also have $a_1(\xi)  < 0$, a contradiction.

Thus $a_1(\xi_0) = 0$ in a neighborhood of $\xi$; so $Y$ is actually a cone over a compact log Calabi--Yau variety $(X,B)$ which is not klt since $Y \backslash \set{0}$ is not klt. Using \cite[Theorem 4.1]{OS15}, we see that for every quasi-regular $\xi_0$ in the neighborhood, the corresponding quotient $(X,B)$ is in fact not K-stable. We can then find a non-trivial $T_0$-equivariant test configuration of $(X,B)$ corresponding to a non-trivial test configuration $(\Ycal,\eta)$ of $Y$ such 
that 
\[ DF(\Ycal, \xi_0, \eta) \leq 0.\]
Choosing a quasi-regular sequence $\xi_0$ going to $\xi$ while fixing  $(\Ycal,\eta)$, we have that 
\[ DF(\Ycal, \xi, \eta) \leq 0, \]
a contradiction. We conclude that $Y \backslash \set{0}$ is klt if $(Y,T,\xi)$ is K-stable. 

Conversely, assume that $Y \backslash \set{0}$ is klt.  If there is a quasi-regular $\xi_0$ such that $a_1(\xi_0) > 0$, then the quotient $(X,B)$ is log Fano with klt singularities; i.e. $Y$ is a Fano cone singularity. But a K-semistable Reeb vector is unique and satisfes $a_1(\xi) > 0$ in this case, a contradiction. It follows that 
\[ a_1(\xi_0) \leq 0 \]
for every quasi-regular $\xi_0$, hence any quasi-regular quotient is either a klt log Calabi--Yau pair or a klt log pair of general type, both are K-stable by \cite[Theorem 4.1]{OS15}. In either case $\text{Aut}(X,B)$ is finite  (for a log pair of general type, see Lemma \ref{lemma:iitaka_finite_group}; for a log Calabi--Yau pair, 
see \cite{BG08, Oda12} for an orbifold pair $(X,B)$; and \cite[Theorem 5.3]{DR24} for the general case). Thus the Reeb cone of $Y$ is one-dimensional and the K-semistable Reeb vector is unique and coincides with the Reeb vector of the quotient pair (in particular $a_1(\xi_0) = a_1(\xi) = 0$). It follows that $Y$ is K-stable.


\subsection{Proof of Theorem \ref{theorem:cscK_gorenstein}}
 
Since $Y$ is assumed to be K-stable $\Qbb$-Gorenstein, $Y \backslash \set{0}$ is klt, we can define cscK metrics outside the apex in the sense of \cite{EGZ}. 
Any such metric then induces a cscS metric with curvature $n(n+1)$ on the (singular) Sasaki link with $a_1(\xi) > 0$. By continuity of $a_1$, we can choose $\xi_0$ rational close to $\xi$ such that $a_1(\xi_0) > 0$, with $(X,B)$ as the corresponding quotient of $Y$ by $\xi_0$, polarized by $L$. 

Again since $Y$ is $\Qbb$-Gorenstein we have $K_{X,B} = \gamma L$, where $\gamma = -a_1(\xi_0)$ since $a_1(\xi_0) = (-K_{X,B}) L^{n-1} / L^n$, cf. Proposition \ref{prop:indexcharexpansion}. This together with the K-stability assumption implies that $(X,B)$ is klt (by Theorem \ref{theorem:main}) with moreover $\gamma <  0$, hence from Theorem \ref{theorem:cone-base-singularities} $Y$ is klt. 

Thus, $Y$ is a K-stable Fano cone singularity, but K-stability of such cone implies that $(Y, \xi)$ has Ricci-flat Kähler cone metric with the same Reeb vector $\xi$, which is necessarily unique \cite{MSY08, CS18}. It follows from uniqueness of cscK cone metric with respect to $\xi$ that there is an automorphism $\phi$ of $(Y,\xi)$ sending the Ricci-flat metric to the cscK cone metric while fixing the Reeb vector and complex structure. This terminates our proof. 

\printbibliography
\end{document}